\documentclass[psreqno,reqno,11pt]{amsart}
\usepackage{eurosym}
\usepackage{amssymb}
\usepackage{tikz}
\usepackage{amsmath}
\usepackage{tikz}
\usepackage{pgflibraryplotmarks}
\usepackage{wasysym}
\usepackage{stmaryrd}
\usepackage[english]{babel}

\usepackage{hyperref}

\setcounter{MaxMatrixCols}{10}

\linespread{1.3}
\addtolength{\textwidth}{1.0in} \addtolength{\hoffset}{-0.5in}
\theoremstyle{plain}
\newtheorem{theorem}{Theorem}[section]
\newtheorem{proposition}[theorem]{Proposition}
\newtheorem{lemma}[theorem]{Lemma}
\newtheorem{corollary}[theorem]{Corollary}

\theoremstyle{definition}

\newtheorem{remark}[theorem]{Remark}
\newtheorem{notation}[theorem]{Notation}
\newtheorem*{Acknowledgement}{Acknowledgment}
\def\N{\mathbb{N}}

\def\C{\mathbb{C}}
\def\P{\mathbb{P}}
\def\Q{\mathbb{Q}}

\def\e{\epsilon}

\begin{document}
\def\sect#1{\section*{\leftline{\large\bf #1}}}
\def\th#1{\noindent{\bf #1}\bgroup\it}
\def\endth{\egroup\par}

\title[Series and Power Series on Universally Complete Complex Vector Lattices]{
	Series and Power Series on Universally Complete Complex Vector Lattices}
\author{M. Roelands}
\address{Unit for BMI, North-West University, Private Bag X6001, Potchefstroom,
	2520, South Africa}
\email{mark.roelands@gmail.com}
\author{C. Schwanke}
\address{Unit for BMI, North-West University, Private Bag X6001, Potchefstroom,
	2520, South Africa}
\email{cmschwanke26@gmail.com}
\date{\today}
\subjclass[2010]{Primary: 46A40; Secondary: 40J05}
\keywords{series, power series, complex vector lattice, order convergence, complex analysis}

\begin{abstract}
In this paper we prove an $n$th root test for series as well as a Cauchy-Hadamard type formula and Abel's' theorem for power series on universally complete Archimedean complex vector lattices. These results are aimed at developing an alternative approach to the classical theory of complex series and power series using the notion of order convergence.
\end{abstract}

\maketitle

\section{Introduction}\label{S:intro}

The theory of complex analysis on complex Banach spaces enjoys a long, well-known, and fruitful history, which is summarized in the text \cite{Muj}. This theory generalizes the classical theory of functions of a complex variable by using norms as a generalization of the ordinary modulus on the complex plane. In this paper we investigate complex analysis via an alternative generalization of the complex modulus on $\mathbb{C}$: \textit{the modulus on Archimedean complex vector lattices}. This complex vector lattice modulus naturally leads to the notion of order convergence, providing us with an order-theoretic perspective of classical complex analysis.

The idea of an abstract order-theoretic complex modulus, to the authors' best knowledge, first appeared on complex AL-spaces in a 1963 paper by Rieffel (see \cite{Rieffel1,Rieffel2}). Here Rieffel defined the modulus for any element $z$ of a complex $AL$-space by
\begin{equation}\label{eq:mod}
|z|:=\sup\{Re(e^{i\theta}z):\theta\in[0,2\pi]\}.
\end{equation}
In 1968, this formula for the modulus was defined more generally on Banach lattices by Lotz \cite{Lotz} and was further extended to vector space complexifications of uniformly complete Archimedean vector lattices in 1971 by Luxemburg and Zaanen \cite{LuxZan2}. The authors of \cite{LuxZan2} referred to these complexifications of uniformly complete Archimedean vector lattices as \textit{complex Riesz spaces}.

The notion of a complex Riesz space, or complex vector lattice, was extended slightly by de Schipper in 1973 (see \cite{dS}), who defined a complex vector lattice to be a complex vector space $E$ of the form $E=E_\rho\oplus iE_\rho$, where $E_\rho$ is a real vector lattice that is closed under the supremum in \eqref{eq:mod}. This extension was not a trivial one, as there exist real vector lattices that are closed under the supremum in \eqref{eq:mod} that are not uniformly complete. An example provided by Azouzi in \cite{Az} is the space of all real-valued step functions. A trivial variation of the formula \eqref{eq:mod} was termed the \textit{de Schipper formula} in \cite{AzBoBus}.

Providing an alternative approach, Mittelmeyer and Wolff in \cite{MW} defined a modulus on a complex vector space $E$ to be an idempotent mapping $|\cdot|\colon E\to E$ that satisfies
\begin{itemize}
	\item[(i)] $|\alpha x|=|\alpha||x|$ for all $\alpha\in\C$ and each $x\in E$,
	\item[(ii)] $\Bigl|\bigl||x|+|y|\bigr|-|x+y|\Bigr|=|x|+|y|-|x+y|$, and
	\item[(iii)] $E$ is in the $\C$-linear hull of the range of $|\cdot|$. 
\end{itemize}
The authors of \cite{MW} proved that any complex vector space $E$ equipped with a modulus is a complex vector lattice (in the sense of de Schipper), and that the modulus on $E$ was necessarily given by the de Schipper formula.

We focus here on using this complex vector lattice modulus to abstract the classical $n$th root test for series as well as the Cauchy-Hadamard formula and Abel's Theorem for power series on universally complete Archimedean complex vector lattices (see Theorem~\ref{T:nthroottest}, Theorem~\ref{T:CH}, and Proposition~\ref{P:Abel's}). Indeed, universally complete Archimedean complex vector lattices provide a natural framework for order-theoretic generalizations of these classical results. In particular, these spaces possess a vector lattice complex modulus, giving us notions of magnitude and order convergence. Additionally, it is well known that every universally complete Archimedean (real) vector lattice is in fact a $\Phi$-algebra, i.e. an $f$-algebra with a multiplicative identity. It follows that universally complete Archimedean complex vector lattices possess an algebraic structure which is compatible with their order structure. Moreover, we can define $n$th roots of positive elements inside every universally complete Archimedean complex vector lattice, allowing us to formulate an abstract $n$th root test in this setting. We stress that the assumption of universal completeness is essential to this paper. For example, Remark~\ref{R:notuc} illustrates that Lemma~\ref{L:geoser} and Theorem~\ref{T:nthroottest}(i) fail to hold inside a non-universally complete Archimedean complex $\Phi$-algebra.

To our best knowledge, this is the first paper that studies power series abstractly on universally complete Archimedean complex vector lattices. The main results here illustrate how fundamental aspects of vector lattice theory, such as weak order units and projection bands, play a significant yet almost hidden role in classical complex analysis.

\section{Preliminaries}\label{S:prelims}

We refer the reader to the standard texts \cite{AB,LuxZan1,Zan2} for any unexplained terminology or basic results regarding vector lattices, $f$-algebras, and $\Phi$-algebras. As conventional, $\mathbb{C}$ represents the field of complex numbers in this paper. The set of strictly positive integers is represented by $\mathbb{N}$, and we use the notation $\N_0:=\N\cup\{0\}$ throughout.

We say that an Archimedean (real) vector lattice $E$ is \textit{square mean closed} (see \cite[page 482]{AzBoBus} or \cite[page 356]{dS}) if $\sup\{ (\cos\theta)f+(\sin\theta)g:\theta\in[0,2\pi]\}$
exists in $E$ for every $f,g\in E$, in which case we write
\[
f\boxplus g:=\sup\{ (\cos\theta)f+(\sin\theta)g:\theta\in[0,2\pi]\}=\sup\{Re\bigl(e^{-i\theta}(f+ig)\bigr):\theta\in[0,2\pi]\}.
\]

For example, every uniformly complete Archimedean vector lattice is square mean closed (see \cite[Section 2]{BeuHuidP}). Given a square mean closed Archimedean (real) vector lattice $E$, we call the vector space complexification $E\oplus iE$ an \textit{Archimedean complex vector lattice} \cite[pages 356--357]{dS}. The vector lattice $E$ is called the \textit{real part} of $E\oplus iE$. Given an Archimedean complex vector lattice $E$, we denote its real part by $E_\rho$.

The \textit{modulus} on an Archimedean complex vector lattice $E$ is defined by
\[
|f+ig|:=f\boxplus g\quad (f,g\in E_\rho).
\]

The notion of the positive cone of a vector lattice naturally extends to complex vector lattices. Indeed, the \textit{positive cone} of an Archimedean complex vector lattice $E$ is the set $E_{+}:=\{ f\in E:|f|=f\}$. Note that $E_+$ is also the positive cone of $E_\rho$.

A key observation to be made is that every universally complete Archimedean (real) vector lattice is a $\Phi$-algebra (see e.g. \cite[Theorem~15.22]{dJvR}), allowing us to introduce the notions of power series and $n$th roots in these spaces. Recall that a \textit{(real) $\Phi$-algebra} is a (real) $f$-algebra possessing a multiplicative identity. The multiplication on a universally complete Archimedean complex vector lattice $E$ will always be indicated by juxtaposition, and we denote the multiplicative identity of $E$ by $e$ throughout. Observe that the Dedekind completeness of $E$ implies that $E$ is square mean closed, and also note that the multiplication on $E$ canonically extends to $E\oplus iE$. Therefore, $E\oplus iE$ is an Archimedean complex vector lattice endowed with a complex $\Phi$-algebra structure. For more information on complex $\Phi$-algebras, see \cite{BusSch3}.

Given an Archimedean (real) $f$-algebra $A$, $a\in A_+$, and $n\in\N$, if there exists $r\in A_+$ such that $r^n=a$, we call $r$ an \textit{$n$th root} of $a$. In a uniformly complete Archimedean complex $\Phi$-algebra $A$, there exists a unique $n$th root of $a$ for every $a\in A_+$ and all $n\in\N$ (see \cite[Corollary~6]{BeuHui}). This $n$th root of $a$ is denoted by $a^{1/n}$. 

We proceed by recording some fundamental properties of an Archimedean complex $\Phi$-algebra $A$ that we will repeatedly use throughout the next section. Real $\Phi$-algebra analogues of statements (i)--(iv) can be found in \cite[Section 142]{Zan2}, and (v) is a consequence of (iv), which follows from \cite[Theorem 2.37]{AB}. Finally, \cite[Theorem 2.44]{AB} and \cite[Theorem~18.13]{LuxZan1} yield (vi).

For all $a,b\in A$ and any order projection $\P$ on $A$,

\begin{itemize}
	\item[(i)] $ab=ba$.
	\item[(ii)] $|ab|=|a||b|$.
	\item[(iii)] $|a|\wedge|b|=0$ if and only if $ab=0$.
	\item[(iv)] If $a$ is invertible and positive then $a^{-1}$ is positive.
	\item[(iv)] $\P(ab)=a\P(b)$.
	\item[(v)] $\P(ab)=\P(a)\P(b)$.
	\item[(vi)] $\P$ is an order continuous normal Riesz homomorphism.
\end{itemize}

We conclude this section by introducing the concept of order convergence on Archimedean complex vector lattices, which we use to determine the convergence of series on these spaces. Recall that a sequence $(x_n)_{n\geq 0}$ in a complex vector lattice \textit{converges in order} to $x$, which we denote by $x_n\to x$, if there exists a positive decreasing sequence $(p_n)_{n\geq 0}$ with $\inf_{n\geq0}p_n=0$ (in symbols $p_n\downarrow 0$) and $|x_n-x|\leq p_n$ for all $n\in\N_0$. We call a sequence $(x_n)_{n\geq 0}$ an \textit{order Cauchy sequence} if there exists a sequence $(p_n)_{n\geq 0}$ satisfying $p_n\downarrow 0$ for which $|x_n-x_m|\leq p_n$ for all $m,n\in\N_0$ with $n\leq m$. A complex vector lattice $E$ with the property that every order Cauchy sequence converges in order in $E$ is called \textit{order Cauchy complete}. Every Dedekind complete complex vector lattice is order Cauchy complete (see e.g. \cite[Exercise~10.6]{LilZan}).

Given an order bounded sequence $(x_n)_{n\geq 0}$ in a Dedekind $\sigma$-complete (real) vector lattice $E$, we define
\[
\limsup_{n\to\infty}x_n:=\inf_{n\geq 0}\sup_{m\geq n}x_n\qquad \text{and}\qquad \liminf_{n\to\infty}x_n:=\sup_{n\geq 0}\inf_{m\geq n}x_n.
\]
Note that $\limsup_{n\to\infty}x_n$ and $\liminf_{n\to\infty}x_n$ are in fact the order limits of the sequences $(\sup_{m\geq n}x_n)_{n\geq 0}$ and $(\inf_{m\geq n}x_n)_{n\geq 0}$, respectively.

\section{Series and Power Series}\label{S:results}

A \textit{series} in an Archimedean complex vector lattice $E$ is an infinite sum $\sum_{n=0}^{\infty}a_n$, where $a_n\in E$ for all $n\in\N_0$. We say $\sum_{n=0}^{\infty}a_n$ \textit{converges in order} if the sequence $\left(\sum_{n=0}^{m}a_n\right)_{m\geq 0}$ of partial sums has an order limit in $E$. Moreover, $\sum_{n=0}^{\infty}a_n$ is said to \textit{converge absolutely in order} if $\sum_{n=0}^{\infty}|a_n|$ converges in order. It is readily verified that if $E$ is order Cauchy complete then any series in $E$ that converges absolutely in order also converges in order.

We proceed with the divergence test for series in Archimedean complex vector lattices, whose standard proof is left to the reader.

\begin{lemma}[\textbf{The Divergence Test}]\label{L:divtest}
	Let $E$ be an Archimedean complex vector lattice. If the series $\sum_{n=0}^{\infty}a_n$ in $E$ converges in order then $a_n\to0$.
\end{lemma}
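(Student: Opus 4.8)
The plan is the standard telescoping argument, adapted to the order-convergence setting. Suppose $\sum_{n=0}^{\infty}a_n$ converges in order, and write $s_m:=\sum_{n=0}^{m}a_n$ for the partial sums, so that $s_m\to s$ for some $s\in E$. By definition of order convergence there is a sequence $(p_m)_{m\geq 0}$ in $E_+$ with $p_m\downarrow 0$ and $|s_m-s|\leq p_m$ for all $m\in\N_0$.

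Next I would express the terms via the partial sums: for every $m\in\N$ we have $a_m=s_m-s_{m-1}=(s_m-s)-(s_{m-1}-s)$. Applying the triangle inequality for the modulus on an Archimedean complex vector lattice together with the estimates above gives
\[
|a_m|=\bigl|(s_m-s)-(s_{m-1}-s)\bigr|\leq|s_m-s|+|s_{m-1}-s|\leq p_m+p_{m-1}\leq 2p_{m-1}\qquad(m\in\N).
\]

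It then remains to package this into a single dominating null sequence. Set $q_0:=|a_0|+2p_0$ and $q_m:=2p_{m-1}$ for $m\in\N$. Since $(p_m)_{m\geq 0}$ is decreasing, the sequence $(q_m)_{m\geq 0}$ is decreasing as well, and $\inf_{m\geq 0}q_m=2\inf_{m\geq 1}p_{m-1}=2\inf_{m\geq 0}p_m=0$, so $q_m\downarrow 0$. By construction $|a_m|\leq q_m$ for all $m\in\N_0$, whence $a_n\to 0$. The only point requiring any care is the index shift in defining $(q_m)_{m\geq 0}$ so that it remains positive and decreasing while still dominating $|a_0|$; beyond that the argument is entirely routine, the sole non-elementary ingredient being the triangle inequality for the complex vector lattice modulus.
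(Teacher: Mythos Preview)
Your argument is correct and is exactly the ``standard proof'' the paper alludes to; the paper itself omits the proof entirely, merely stating that the Divergence Test holds and leaving the details to the reader. Your telescoping estimate $|a_m|\leq|s_m-s|+|s_{m-1}-s|\leq 2p_{m-1}$ together with the shifted dominating sequence $(q_m)$ is precisely the routine verification that was intended.
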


The following notation will be used frequently throughout the remainder of the paper.

\begin{notation}\label{N:ll}
	Given an Archimedean (real) vector lattice $E$ and $x,y\in E$, we write $x\ll y$ if $(y-x)^+$ is a weak order unit in $E$.
\end{notation}

For example, in the space $\ell_\infty(\N)$ of all bounded sequences, two elements $x,y\in\ell_\infty(\N)$ satisfy $x\ll y$ if and only if $x_n<y_n$ for every $n\in\N$.

\begin{remark}\label{R:wou^-1}
	Every positive invertible element in an Archimedean (real) $\Phi$-algebra is a weak order unit \cite[Theorem~142.2(ii)]{Zan2}. The converse however is not true in general. For example, the sequence $(x_n)_{n\geq 1}$ defined by $x_n=\frac{1}{n}\ (n\in\N)$ is a weak order unit in the space of all convergent real-valued sequences $c$ but is not invertible in $c$. However, if $E$ is a universally complete Archimedean (real) vector lattice then $E$ can be identified with $C^\infty(X)$ for some $X$ that is an extremely disconnected compact Hausdorff space, see \cite[Theorem~15.22]{dJvR}. Using this identification, we can easily see that every weak order unit $u$ in a universally complete Archimedean complex vector lattice $E$ is invertible. Indeed, if $u$ is a weak order unit of $C^\infty(X)$, we have from \cite[Theorem~1.38]{AB} that
	\[
	\textbf{1}=\sup_{n\geq 1}\{\textbf{1}\wedge nu\},
	\]
	where $\textbf{1}$ is the constant function on $X$ taking the value $1$. It follows that there does not exist a dense subset $Y$ of $X$ for which $u(x)=0$ for all $x\in Y$. Hence $u$ has a continuous inverse in $C^\infty(X)$. 
\end{remark}

We proceed by providing a useful characterization of the relation $\ll$ introduced in Notation~\ref{N:ll} in the context of universally complete Archimedean (real) vector lattices.

\begin{lemma}\label{L:ll}
Let $E$ be a universally complete Archimedean (real) vector lattice, and let $x,y\in E$. Then $x\ll y$ if and only if $x\leq y$ and $y-x$ is invertible.
\end{lemma}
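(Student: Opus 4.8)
The plan is to prove both implications by exploiting the identification of a universally complete Archimedean vector lattice $E$ with $C^\infty(X)$, where $X$ is an extremally disconnected compact Hausdorff space, together with Remark~\ref{R:wou^-1}, which tells us that in this setting weak order units and positive invertible elements coincide. First I would record the trivial observation that $x \ll y$ forces $x \le y$: by definition $x \ll y$ means $(y-x)^+$ is a weak order unit, hence $(y-x)^+$ is strictly positive on a dense subset of $X$; if $x \le y$ failed, then $(y-x)^-$ would be strictly positive on some nonempty open set $U$, on which $(y-x)^+$ vanishes, contradicting that a weak order unit cannot vanish on a nonempty open set of an extremally disconnected space. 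Once $x \le y$ is established, we have $(y-x)^+ = y-x$, so $x \ll y$ is equivalent to: $x \le y$ and $y - x$ is a weak order unit.

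The heart of the argument is then the equivalence, for a positive element $w := y - x \in E_+$, between ``$w$ is a weak order unit'' and ``$w$ is invertible''. The forward direction is exactly the content of Remark~\ref{R:wou^-1}: identifying $E$ with $C^\infty(X)$, a weak order unit $w$ cannot vanish on any nonempty open subset of $X$ (equivalently $\textbf{1} = \sup_{n\geq 1}\{\textbf{1}\wedge nw\}$ by \cite[Theorem~1.38]{AB}), so $\{x \in X : w(x) \neq 0\}$ is dense open, and since $X$ is extremally disconnected $1/w$ extends continuously from this dense set to an element of $C^\infty(X)$, giving the inverse. The reverse direction is Remark~\ref{R:wou^-1} as well, citing \cite[Theorem~142.2(ii)]{Zan2}: every positive invertible element of an Archimedean $\Phi$-algebra is a weak order unit. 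Assembling these: $x \ll y \iff x \le y$ and $y-x$ is a weak order unit $\iff x \le y$ and $y-x$ is invertible.

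I would present this compactly by noting that the only genuinely new content beyond the cited remarks is the implication $x \ll y \Rightarrow x \le y$, and handle that either via the $C^\infty(X)$ picture as above, or purely order-theoretically: if $(y-x)^+$ is a weak order unit then $(y-x)^+ \wedge (y-x)^- = 0$ forces $(y-x)^- = 0$, since a weak order unit has trivial disjoint complement only when that complement is $\{0\}$ — more precisely $(y-x)^-$ is disjoint from the weak order unit $(y-x)^+$, hence $(y-x)^- = 0$, i.e. $x \le y$. This second route is cleaner and avoids invoking the representation, so I would use it for this step and reserve the representation-theoretic reasoning for the equivalence of weak order unit and invertibility (which is already done in Remark~\ref{R:wou^-1}).

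The main obstacle, such as it is, is purely expository: one must be careful that ``weak order unit'' is being used consistently (an element $u \ge 0$ with $u \wedge |z| = 0 \Rightarrow z = 0$), so that the disjointness argument for $x \le y$ is airtight, and that the invertibility claim genuinely needs universal completeness — this is where Remark~\ref{R:wou^-1}'s counterexample in $c$ shows the hypothesis cannot be dropped. No serious computation is required; the proof is essentially three lines once the right facts are quoted in the right order.
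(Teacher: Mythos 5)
Your proposal is correct and follows essentially the same route as the paper: both directions rest on Remark~\ref{R:wou^-1} (weak order units are invertible in the universally complete setting, and positive invertible elements are weak order units), with the only extra step being that $x\ll y$ forces $x\le y$. Your direct disjointness argument --- $(y-x)^+\wedge(y-x)^-=0$ with $(y-x)^+$ a weak order unit forces $(y-x)^-=0$ --- is just a slight streamlining of the paper's computation $(y-x)^-=\sup_{n\in\N}\,(y-x)^-\wedge n(y-x)^+=0$ via \cite[Theorem~1.38]{AB}, so the content is identical.
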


\begin{proof}
Suppose $x\ll y$, so $(y-x)^+$ is a weak order unit. From Remark~\ref{R:wou^-1}, $(y-x)^+$ is invertible. Moreover, we have from \cite[Theorem~1.38]{AB} that
\begin{align*}
(y-x)^-=\sup_{n\in\N}\ (y-x)^-\wedge n(y-x)^+=0.
\end{align*}
Thus $x\leq y$.

Conversely, assume $x\leq y$ and that $y-x$ is invertible. Then $(y-x)^+=y-x$ is a weak order unit by Remark~\ref{R:wou^-1}.
\end{proof}

The following lemma regarding the convergence of the geometric series will play a crucial role in the proof of the $n$th root test Theorem~\ref{T:nthroottest} and Abel's theorem Proposition~\ref{P:Abel's}. In a universally complete Archimedean (real or complex) vector lattice $E$ and $a\in E$ we adopt the standard convention that $a^0=e$.

\begin{lemma}\label{L:geoser}
Let $E$ be a universally complete Archimedean complex vector lattice. The geometric series $\sum_{n=0}^{\infty}a^n$ in $E$ converges absolutely in order if and only if $|a|\ll e$. In this case, $\sum_{n=0}^{\infty}a^n=(e-a)^{-1}$.
\end{lemma}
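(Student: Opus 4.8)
The plan is to identify $E$ with $C^\infty(X)$ for an extremally disconnected compact Hausdorff space $X$ via \cite[Theorem~15.22]{dJvR}, so that multiplication, order, and the modulus are all computed pointwise on the dense-domain continuous functions, and then reduce the statement to the classical scalar geometric series fibrewise. First I would treat the forward direction: suppose $\sum_{n=0}^\infty |a|^n = \sum_{n=0}^\infty |a^n|$ converges in order to some $s \in E_+$. By the Divergence Test (Lemma~\ref{L:divtest}) applied to this series of positive terms, $|a|^n \to 0$ in order, and since the partial sums $\sum_{n=0}^m |a|^n$ increase to $s$, we have in particular $e + |a| \le s$, hence $e \le s$. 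The key point is to show $|a| \ll e$, i.e. by Lemma~\ref{L:ll} that $e - |a|$ is invertible (note $|a| \le e$ will come out of the argument too, or can be extracted as in Lemma~\ref{L:ll}). Working in $C^\infty(X)$: on the dense open set where the representing functions are finite and continuous, the convergence of the increasing sequence of partial sums to the continuous function $s$ forces, at each such point $x$, the scalar series $\sum_n |a(x)|^n$ to converge, hence $|a(x)| < 1$; moreover $\sum_n |a(x)|^n = (1 - |a(x)|)^{-1} = s(x)$. Thus $s(x)(1 - |a(x)|) = 1$ on a dense set, so $s\,(e - |a|) = e$, giving that $e - |a|$ is invertible with inverse $s$; by Lemma~\ref{L:ll}, $|a| \ll e$.

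For the converse, assume $|a| \ll e$, so by Lemma~\ref{L:ll}, $|a| \le e$ and $e - |a|$ is invertible; let $b := (e-|a|)^{-1}$, which is positive by property (iv) of $\Phi$-algebras. I would show the partial sums $s_m := \sum_{n=0}^m |a|^n$ order-converge to $b$. The telescoping identity $(e - |a|)\, s_m = e - |a|^{m+1}$ holds (using commutativity, property (i)), whence $b - s_m = b\,|a|^{m+1} = b\,|a|\cdot|a|^m$. Since $|a| \le e$ and $b \ge 0$, we get $0 \le b - s_m \le b\,|a|^m =: p_m$, and $p_m \downarrow$; it remains to see $\inf_m p_m = 0$. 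Here I would use the pointwise picture again: on the dense set where things are continuous and finite, $|a(x)| < 1$ wherever... — more carefully, $e-|a|$ invertible means $|a(x)| \ne 1$ on a dense set, and $|a| \le e$ forces $|a(x)| \le 1$, so $|a(x)| < 1$ on a dense set, giving $b(x)|a(x)|^m \to 0$ pointwise; since $(p_m)$ is a decreasing sequence in $C^\infty(X)$ whose pointwise infimum is $0$ on a dense set, its order infimum in $E$ is $0$ (the order infimum of a decreasing sequence of continuous functions agrees with the pointwise infimum off a meagre set on an extremally disconnected space). Hence $s_m \to b$ in order, so $\sum_{n=0}^\infty |a|^n = \sum_{n=0}^\infty |a^n|$ converges in order, i.e. $\sum a^n$ converges absolutely in order. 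The same telescoping argument with $a$ in place of $|a|$ then shows $\sum_{n=0}^m a^n \to (e-a)^{-1}$: note $|a| \le e$ with $e - |a|$ invertible implies $e - a$ is invertible (its modulus $|e-a| \ge e - |a|$... rather, one checks $e-a$ is invertible because $a = a^+ - i$-parts are dominated; cleanest is $(e-a)^{-1} = b\sum$-type Neumann argument, or directly: the partial sums $\sum_{n=0}^m a^n$ are order Cauchy since $|\sum_{n=k+1}^m a^n| \le \sum_{n=k+1}^m |a|^n \le s_m - s_k \le b - s_k \to 0$, so they converge in order to some $t$; then $(e-a)\sum_{n=0}^m a^n = e - a^{m+1}$ and $|a^{m+1}| = |a|^{m+1} \le b - s_m$... actually $|a|^{m+1} \le |a|^m \le p_m \to 0$, wait need $|a|^{m+1}\to 0$ which follows from $|a|^{m+1} \le s_{m+1} - s_m \le b - s_m$; hmm, that bounds it by $p_m$ only after multiplying by $b$, but $|a|^{m+1} \le e\cdot|a|^{m+1}$ and we have $|a|^{m+1} \to 0$ directly from $|a|^n \to 0$ since the positive series converges), so passing to the order limit gives $(e-a)t = e$, i.e. $t = (e-a)^{-1}$.)

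The main obstacle I anticipate is the passage between \emph{order} infima/limits in $E$ and \emph{pointwise} behaviour of the representing functions in $C^\infty(X)$ — in particular justifying carefully that for a decreasing sequence $(p_m)$ in $C^\infty(X)$ with $p_m(x) \to 0$ on a dense $G_\delta$, one has $\inf_m p_m = 0$ in the vector lattice sense. This requires the standard fact that on an extremally disconnected compact Hausdorff space the order operations in $C^\infty(X)$ are computed "pointwise up to nowhere dense sets", which I would either cite from \cite{dJvR} or prove via the description of $C^\infty(X)$ as equivalence classes of extended-real continuous functions finite on a dense set. A cleaner alternative, avoiding the representation entirely, is to observe that order Cauchy completeness of $E$ (it is Dedekind complete) reduces "converges absolutely in order" to "the positive increasing partial sums $s_m$ are order bounded", and then to show $b = (e-|a|)^{-1}$ is an upper bound for $\{s_m\}$ directly from $(e-|a|)s_m = e - |a|^{m+1} \le e$ together with $(e-|a|)^{-1} \ge 0$ and property (vi)-style monotonicity — this gives convergence of the positive series immediately, after which the telescoping identity identifies the limit as $b$ and the sum $\sum a^n$ as $(e-a)^{-1}$, with the invertibility of $e-a$ following from $|e - (e-a)| = |a| \le e$ and $e - |a|$ invertible by a standard $\Phi$-algebra lemma. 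I would present this second route as the main line and keep the $C^\infty(X)$ picture in reserve for the one remaining point, namely that $|a| \ll e$ is \emph{necessary}, where extracting $|a| \le e$ and invertibility of $e - |a|$ from order-boundedness of $\{s_m\}$ seems to genuinely need the representation (or at least a projection-band argument on the carrier of $(|a| - e)^+$).
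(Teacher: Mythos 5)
Your proposal is correct, but your main line of argument differs from the paper's in the necessity direction. The paper never invokes the representation $E\cong C^\infty(X)$ in this lemma: for necessity it multiplies the telescoping identity $(e-|a|)\sum_{n=0}^{m}|a|^n=e-|a|^{m+1}$ through and passes to order limits, using order continuity of multiplication by a fixed element on the left and Lemma~\ref{L:divtest} on the right, so that uniqueness of order limits gives $(e-|a|)\sum_{n=0}^{\infty}|a|^n=e$; invertibility of $e-|a|$ (and, via positivity of the inverse, $|a|\leq e$) then yields $|a|\ll e$ by Lemma~\ref{L:ll}. So your closing remark that this direction ``seems to genuinely need the representation'' is not accurate -- the intrinsic telescoping-plus-order-continuity argument handles it, and is shorter. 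Your sufficiency argument in the ``cleaner alternative'' (bound the increasing partial sums by $(e-|a|)^{-1}$ via the telescoping identity and positivity of the inverse, use Dedekind completeness, then identify the limit and run the same telescoping argument for $\sum a^n$) is exactly the paper's proof. Your fibrewise route for necessity is also legitimate -- the paper itself uses $C^\infty(X)$ in Remark~\ref{R:wou^-1} -- and it has the virtue of making the scalar geometric series visible pointwise; its cost is precisely the dictionary you flag, namely that order suprema/infima in $C^\infty(X)$ agree with pointwise ones only off meagre sets, which you must cite or prove. Note, though, that for necessity you can sidestep the equality $s(x)=\sum_n|a(x)|^n$ entirely: the pointwise inequality $\sum_{n=0}^{m}|a(x)|^n\leq s(x)$, valid wherever the representing functions are finite, already gives $|a(x)|<1$ on a dense set, hence $e-|a|=(e-|a|)^+$ is a weak order unit and $|a|\ll e$ by definition of $\ll$, so the meagre-set machinery is only genuinely needed if you insist on identifying the sum pointwise.
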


\begin{proof}
Suppose $\sum_{n=0}^{\infty}a^n$ converges absolutely in order. Note that for $m\in\N_0$,
\begin{equation}\label{eq:geo}
(e-|a|)\sum_{n=0}^{m}|a|^n=e-|a|^{m+1}.
\end{equation}
Moreover,
\[
(e-|a|)\sum_{n=0}^{m}|a|^n\to(e-|a|)\sum_{n=0}^{\infty}|a|^n,
\]
while by Lemma~\ref{L:divtest},
\[
e-|a|^{m+1}\to e.
\]
Since order limits are unique, we have
\[
(e-|a|)\sum_{n=0}^{\infty}|a|^n=e.
\]
It now follows from Lemma~\ref{L:ll} that $|a|\ll e$.

On the other hand, if $|a|\ll e$ then $|a|\leq e$ and $e-|a|$ is invertible by Lemma~\ref{L:ll}. Thus for each $m\in\N_0$,
\[
\sum_{n=0}^{m}|a|^n=(e-|a|)^{-1}(e-|a|^{m+1})\leq(e-|a|)^{-1},
\]
hence $\sum_{n=0}^{\infty}|a|^n$ converges in order, as $E$ is Dedekind complete. It follows from Lemma~\ref{L:divtest} that $|a|^n\to 0$. Hence, in light of \eqref{eq:geo}, we obtain $\sum_{n=0}^{\infty}|a|^n=(e-|a|)^{-1}$.

Finally, since $E$ is order Cauchy complete, $\sum_{n=0}^{\infty}a^n$ also converges in order. A similar argument shows that $\sum_{n=0}^{\infty}a^n=(e-a)^{-1}$.
\end{proof}

Next we provide an order-theoretic version of the classical $n$th root test.

\begin{theorem}[\textbf{$\mathbf{n}$th Root Test}]\label{T:nthroottest}
Let $E$ be a universally complete Archimedean complex vector lattice, and let $\sum_{n=0}^{\infty}a_n$ be a series in $E$.
\begin{itemize}
\item[(i)] If the sequence $(|a_n|^{1/n})_{n\geq 1}$ is order bounded and $\limsup_{n\to\infty}|a_n|^{1/n}\ll e$ then the series $\sum_{n=0}^{\infty}a_n$ converges absolutely in order.
\item[(ii)] If the series $\sum_{n=0}^{\infty}a_n$ converges in order then the sequence $(|a_n|^{1/n})_{n\geq 1}$ is order bounded and $\limsup_{n\to\infty}|a_n|^{1/n}\leq e$.
\item[(iii)] If $(|a_n|^{1/n})_{n\geq 1}$ is order bounded and we have that both $\limsup_{n\to\infty}|a_n|^{1/n}\not\ll e$ and $\limsup_{n\to\infty}|a_n|^{1/n}\leq e$ hold then this test is inconclusive.
\end{itemize}
\end{theorem}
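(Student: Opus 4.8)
The plan is to handle the three parts in order, using Lemma~\ref{L:geoser} as the comparison engine throughout, just as the classical root test is proved by comparison with a geometric series.

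For part~(i), set $u:=\limsup_{n\to\infty}|a_n|^{1/n}$, so by hypothesis $u\ll e$. I would first choose a ``ratio'' element strictly between $u$ and $e$: since $E$ is universally complete, by Lemma~\ref{L:ll} $e-u$ is invertible, and I would pick $b$ with $u\leq b\ll e$ and $u\ll b$ (for instance, working in the $C^\infty(X)$ representation, shrink $e-u$ by a suitable positive invertible factor, or equivalently take $b$ to be the midpoint $u+\tfrac12(e-u)=\tfrac12(u+e)$, which is $\ll e$ since $e-b=\tfrac12(e-u)$ is invertible, and satisfies $b-u=\tfrac12(e-u)$ invertible so $u\ll b$). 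Because $u=\limsup_n|a_n|^{1/n}$ is the order limit of the decreasing sequence $v_N:=\sup_{n\geq N}|a_n|^{1/n}$, and $b-u$ is a weak order unit, there is $N$ with $v_N\leq b$, i.e. $|a_n|^{1/n}\leq b$ for all $n\geq N$. Raising to the $n$th power (the $n$th root map on $E_+$ is multiplicative and order preserving) gives $|a_n|\leq b^n$ for all $n\geq N$. Then $\sum_{n\geq N}|a_n|\leq\sum_{n\geq N}b^n$, and since $|b|=b\ll e$, Lemma~\ref{L:geoser} says $\sum_{n=0}^\infty b^n$ converges in order; comparison and Dedekind completeness give that $\sum_{n=0}^\infty|a_n|$ converges in order, i.e. $\sum a_n$ converges absolutely in order.

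For part~(ii), suppose $\sum a_n$ converges in order. By the Divergence Test (Lemma~\ref{L:divtest}), $a_n\to 0$, so there is a sequence $p_n\downarrow 0$ with $|a_n|\leq p_n$ for all $n$; in particular $(a_n)$ is order bounded, say $|a_n|\leq p_0$ for all $n$. I would then argue that $|a_n|^{1/n}$ is order bounded and that its $\limsup$ is $\leq e$. For the bound: since $p_n\downarrow 0\leq e$, fix $N$ with $p_N\leq e$; then for $n\geq N$ we get $|a_n|\leq p_n\leq p_N\leq e$, so $|a_n|^{1/n}\leq e^{1/n}=e$; for the finitely many $n<N$ use $|a_n|^{1/n}\leq(p_0\vee e)^{1/n}\leq p_0\vee e$ (note $c^{1/n}\leq c\vee e$ for $c\in E_+$, provable via the representation or by $c^{1/n}=(c^{1/n})\wedge e+(c^{1/n}-e)^+\leq e+(c-e)^+\leq e\vee c$ using that the $n$th root map preserves the bands where $c\leq e$ and $c\geq e$). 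Hence $(|a_n|^{1/n})_{n\geq 1}$ is order bounded. Finally $\sup_{n\geq N}|a_n|^{1/n}\leq e$ for every $N\geq$ the index above, so $\limsup_n|a_n|^{1/n}\leq e$.

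Part~(iii) is the inconclusiveness claim: I need two series in the same space $E$, both with order-bounded $(|a_n|^{1/n})$ and with $\limsup_n|a_n|^{1/n}=e$ exactly (so $\leq e$ holds and $\not\ll e$ holds, the latter because $(e-e)^+=0$ is not a weak order unit), one convergent and one divergent in order. Taking $E=C^\infty(X)$ (or just $E=\R$ embedded diagonally, or $\ell_\infty$-type examples), the scalar classical examples transplant directly: $a_n=\tfrac{1}{n^2}\,e$ gives a series converging absolutely in order with $|a_n|^{1/n}=n^{-2/n}e\to e$, hence $\limsup=e$; while $a_n=\tfrac1n\,e$ (or $a_n=e$ for the crudest example, though that one also fails the Divergence Test) gives $\limsup_n|a_n|^{1/n}=e$ with $\sum a_n$ divergent in order, since its partial sums are unbounded multiples of the weak order unit $e$. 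I would present these two examples side by side and note that $\limsup=e$ forces both $\limsup\leq e$ and $\limsup\not\ll e$, so the hypotheses of~(iii) are met yet the conclusion about convergence cannot be determined.

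The main obstacle is the ``strictly between'' step in part~(i): extracting from $u\ll e$ a single element $b$ with $u\leq b\ll e$ \emph{and} with $b-u$ invertible, so that the tail inequality $v_N\leq b$ can be reached for some finite $N$. This is exactly where universal completeness is used in an essential way (cf.\ Remark~\ref{R:notuc}); the cleanest route is the $C^\infty(X)$ representation from \cite[Theorem~15.22]{dJvR}, where $b=\tfrac12(u+e)$ works pointwise, but one must be careful that $v_N\downarrow u$ in order together with $b-u$ a weak order unit genuinely yields $v_N\leq b$ eventually, which follows from \cite[Theorem~1.38]{AB} applied to the weak order unit $(b-u)$ against the infimum $\inf_N(v_N-u)^+=0$.
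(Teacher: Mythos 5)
Your proofs of parts (i) and (ii) both rest on the same false inference: that a decreasing sequence with infimum $u$ must eventually lie below any $b$ for which $b-u$ is invertible. In part (i) this is the step ``there is $N$ with $v_N\leq b$,'' and in part (ii) it is ``fix $N$ with $p_N\leq e$.'' Order convergence $v_N\downarrow u$ only says $\inf_N(v_N-u)=0$; it does not produce a single finite $N$ with $v_N\leq u+\tfrac12(e-u)$, and \cite[Theorem~1.38]{AB} does not supply this. A concrete counterexample: in $E=\R^{\N}$ take $a_n(k)=1$ for $n\leq k$ and $a_n(k)=0$ for $n>k$. Then $b_m:=\sup_{n\geq m}|a_n|^{1/n}$ is the indicator of $\{k:k\geq m\}$, so $b_m\downarrow 0=L\ll e$ and the series converges in order (to $(k)_{k\ge1}$), yet $b_m\not\leq b$ for \emph{every} $m$ and every $b\ll e$, since any $b$ dominating some $b_m$ satisfies $(e-b)^+=0$ on a tail of $\N$ and so is not $\ll e$. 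Your argument therefore cannot be repaired by a cleverer choice of $b$; the failure of ``$\inf=0$ implies eventually small'' is exactly the point where the scalar proof stops generalizing, and it is why the paper's proof of (i) is structured so differently: it splits $E$ into the pairwise disjoint bands carried by the projections $\Q_m=\P_{b_m<e}-\P_{b_{m-1}<e}$, shows $\P_{b_m<e}(b_m)\ll e$ so that the geometric comparison of Lemma~\ref{L:geoser} applies \emph{band by band}, and then reassembles the sum using that $(e-b_m)^+\uparrow e-L$ is a weak order unit, so the bands $B_{b_m<e}$ generate all of $E$.

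Part (ii) needs a similar repair: instead of waiting for $p_N\leq e$, the paper uses the inequality $|a_n|^{1/n}\leq\tfrac1n|a_n|+(1-\tfrac1n)e$ (from the proof of \cite[Lemma~4.2]{BusSch3}), valid for \emph{all} $n$, which gives $\sup_{n\geq m}|a_n|^{1/n}\leq\tfrac1m p_m+e$ and hence $\limsup_{n\to\infty}|a_n|^{1/n}\leq e$ without ever needing a tail of the $p_n$ to drop below $e$. (Your order-boundedness argument via $|a_n|^{1/n}\leq p_0\vee e$ is fine.) Part (iii) is essentially correct as you state it: the pair $\tfrac1{n^2}e$ and $\tfrac1n e$ does exhibit inconclusiveness, though note the paper's second example ($\chi_{\bar D_{1/n}(0)}$ in $\C^{\C}$) is chosen to show the subtler case where $\limsup_{n\to\infty}|a_n|^{1/n}$ is not equal to $e$ but merely fails to be $\ll e$.
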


\begin{proof}
(i) Suppose the sequence $(|a_n|^{1/n})_{n\geq 1}$ is order bounded, and also assume that
\[
L:=\limsup_{n\to\infty}|a_n|^{1/n}\ll e.
\]
Put $b_m:=\sup_{n\geq m}|a_n|^{1/n}$ for $m\in\N$, and set $b_0:=e$. For each $m\in\N_0$ define $B_{b_m<e}$ to be the principal band generated by $(e-b_m)^+$, and set $\P_{b_m<e}$ to be the corresponding order projection onto $B_{b_m<e}$. Moreover, set $B_{b_m\geq e}$ to be the disjoint complement of $B_{b_m<e}$ and denote the corresponding order projection by $\P_{b_m\geq e}$ for each $m\in\N_0$. Define
\[
\Q_m:=\P_{b_m<e}-\P_{b_{m-1}<e}\quad (m\in\N).
\]
Then for $m,p\in\N$ with $m<p$ and $a\in E_+$ we have
\begin{align*}
0\leq\Q_m(a)\wedge\Q_p(a)&=\bigl(\P_{b_m<e}(a)-\P_{b_{m-1}<e}(a)\bigr)\wedge\bigl(\P_{b_p<e}(a)-\P_{b_{p-1}<e}(a)\bigr)\\
&\leq\P_{b_{p-1}<e}(a)\wedge\bigl(a-\P_{b_{p-1}<e}(a)\bigr)\\
&=0.
\end{align*}
Thus $(\Q_m)_{m\geq 1}$ is a pairwise disjoint sequence of order projections. Next we claim that $\P_{b_m<e}(b_m)\ll e$ for all $m\in\N_0$. Indeed, for $m\in\N_0$ we have
\begin{equation}\label{eq:ineqs}
e-\P_{b_m<e}(b_m)\geq\P_{b_m<e}(e-b_m)=(e-b_m)^+\geq 0,
\end{equation}
where the above equality follows from \cite[page 215]{LilZan}. Also, note that
\begin{align*}
e-\P_{b_m<e}(b_m)&=\P_{b_m<e}\bigl(e-\P_{b_m<e}(b_m)\bigr)+\P_{b_m\geq e}\bigl(e-\P_{b_m<e}(b_m)\bigr)\\
&=\P_{b_m<e}\bigl(e-\P_{b_m<e}(b_m)\bigr)+\P_{b_m\geq e}(e).
\end{align*}
Note that by $\eqref{eq:ineqs}$, we have that $\P_{b_m<e}(e-\P_{b_m<e}(b_m))\geq\P_{b_m<e}(e-b_m)^+=(e-b_m)^+$, which shows that
$\P_{b_m<e}(e-\P_{b_m<e}(b_m))$ is a weak order unit in $B_{b_m<e}$. Since we also have that $\P_{b_m\geq e}(e)$ is a weak order unit in $B_{b_m\geq e}$, we conclude that $e-\P_{b_m<e}(b_m)$ is a weak order unit. Thus $\P_{b_m<e}(b_m)\ll e$, as claimed.

Next let $m,p\in\N$ with $m\leq p$. Recalling that $b_m:=\sup_{n\geq m}|a_n|^{1/n}$ for $m\in\N$, we obtain
\begin{align*}
\sum_{n=0}^{p}\Q_m(|a_n|)&\leq\sum_{n=0}^{p}\P_{b_m<e}(|a_n|)=\sum_{n=0}^{m-1}\P_{b_m<e}(|a_n|)+\sum_{n=m}^{p}\P_{b_m<e}(|a_n|)\\
&\leq\sum_{n=0}^{m-1}\P_{b_m<e}(|a_n|)+\sum_{n=m}^{\infty}\bigl(\P_{b_m<e}(b_m)\bigr)^n,
\end{align*}
and we stress that $\sum_{n=m}^{\infty}\bigl(\P_{b_m<e}(b_m)\bigr)^n$ converges in order by Lemma~\ref{L:geoser}. Hence
\[
\sup_{p\geq m}\sum_{n=0}^{p}\Q_m(|a_n|)\in E,
\]
and it follows that $\sum_{n=0}^{\infty}\Q_m(|a_n|)$ converges in order for all $m\in\N$. Also, for all $m,p\in\N$ with $m\neq p$ and each $k,l\in\N_0$ we have $\Q_m(|a_k|)\perp\Q_p(|a_l|)$. Thus $\Q_m(|a_k|)\Q_p(|a_l|)=0$ for all such $k,l,m,p$. It follows that for $t_{1},t_2\in\N$,
\[
\sum_{n=0}^{t_1}\Q_m(|a_n|)\sum_{n=0}^{t_2}\Q_p(|a_n|)=0,
\]
and by the order continuity of $f$-algebra multiplication, with the limits below taken to be order limits, we have for $m\neq p$ that
\[
\sum_{n=0}^{\infty}\Q_m(|a_n|)\sum_{n=0}^{\infty}\Q_p(|a_n|)=\lim_{t_1,t_2\to\infty}\sum_{n=0}^{t_1}\Q_m(|a_n|)\sum_{n=0}^{t_2}\Q_p(|a_n|)=0.
\]
Thus $\sum_{n=0}^{\infty}\Q_m(|a_n|)\perp\sum_{n=0}^{\infty}\Q_p(|a_n|)$ holds for all $m\neq p$, and hence
\[
\sup_{m\geq 1}\sum_{n=0}^{\infty}\Q_m(|a_n|)\in E.
\]
Furthermore, for any $k,p\in\N$ we have $\Q_k\left(\sum_{n=0}^{p}|a_n|\right)\leq\sup_{m\geq 1}\sum_{n=0}^{\infty}\Q_m(|a_n|)$. Hence
\begin{equation}\label{eq:Qineq}
\sup_{k\geq 1}\Q_k\left(\sum_{n=0}^{p}|a_n|\right)\leq\sup_{m\geq 1}\sum_{n=0}^{\infty}\Q_m(|a_n|).
\end{equation}
Using \cite[Theorem~8.2(1)]{LilZan} in the second equality below, we obtain, with the limits below taken to be order limits,
\begin{align*}
\sup_{k\geq 1}\Q_k\left(\sum_{n=0}^{p}|a_n|\right)&=\lim_{m\to\infty}\sup_{k\leq m}\Q_k\left(\sum_{n=0}^{p}|a_n|\right)=\lim_{m\to\infty}\sum_{k=1}^{ m}\Q_k\left(\sum_{n=0}^{p}|a_n|\right)\\
&=\lim_{m\to\infty}\P_{b_m<e}\left(\sum_{n=0}^{p}|a_n|\right).
\end{align*}
Next note that $\bigl((e-b_m)^+\bigr)_{m\geq 1}$ is an increasing sequence with supremum $(e-L)^+=e-L$, which is by assumption a weak order unit. Also, for each $m\in\N$ the band $B$ generated by $\bigcup_{m=1}^{\infty}B_{b_m<e}$ contains $(e-b_m)^+$, so $e-L\in B$, as bands are order closed. Therefore, $B=E$. It now follows from \cite[Theorems~30.4 and 30.5(ii)]{LuxZan1} that
\[
\lim_{m\to\infty}\P_{b_m<e}\left(\sum_{n=0}^{p}|a_n|\right)=\sum_{n=0}^{p}|a_n|.
\]
Therefore, by \eqref{eq:Qineq} and the string of equalities which follow, we have 
\[
\sum_{n=0}^{p}|a_n|\leq\sup_{m\geq 1}\sum_{n=0}^{\infty}\Q_m(|a_n|)
\]
for every $p\in\N$. By the Dedekind completeness of $E$, we have that
\[
\sup_{p\geq 1}\sum_{n=0}^{p}|a_n|\in E.
\]
We conclude that $\sum_{n=0}^{\infty}|a_n|$ converges in order, and in particular,
\[
\sum_{n=0}^{\infty}|a_n|=\sup_{m\geq 1}\sum_{n=0}^{\infty}\Q_m(|a_n|).
\]

(ii) Suppose $\sum_{n=0}^{\infty}a_n$ converges in order. Then $a_n\to 0$ by Lemma~\ref{L:divtest}. Thus there exists a sequence $p_n\downarrow 0$ such that $|a_n|\leq p_n$ for all $n\in\N_0$. By the proof of \cite[Lemma~4.2]{BusSch3}, we have for any $n\in\N$,
\begin{align*}
|a_n|^{1/n}&=\inf\left\{\textstyle{\frac{1}{n}}\theta_{1}|a_n|+(1-\frac{1}{n})\theta_{2} e:\theta_{1},\theta_{2}\in(0,\infty),\ \theta_{1}^{1/n}\theta_{2}^{1-1/n}=1\right\}\\
&\leq\textstyle{\frac{1}{n}|a_n|+(1-\frac{1}{n})e}.
\end{align*}
Hence for all $m,n\in\N$ with $m\geq n$ we have $\sup_{n\geq m}|a_n|^{1/n}\leq\textstyle{\frac{1}{m}}p_m+e$. Taking $m\to\infty$ now yields $\limsup_{n\to\infty}|a_n|^{1/n}\leq e$.

(iii) For any Archimedean complex $\Phi$-algebra $A$, the series $\sum_{n=1}^{\infty}\frac{1}{n^2}e$ converges in order, while
\[
\limsup_{n\to\infty}\left(\frac{1}{n^2}e\right)^{1/n}=e.
\]  

On the other hand, consider the universally complete Archimedean complex vector lattice $E:=\C^{\C}$. For each $n\in\N$ define $f_n:=\chi_{\bar{D}_{1/n}(0)}$, where $\chi_{\bar{D}_{1/n}(0)}$ is the characteristic function on the closed disk $\bar{D}_{1/n}(0)$ centered at $0$ with radius $1/n$. Then $|f_n|^{1/n}=|f_n|$ for every $n\in\N$ and 
\[
L:=\lim\sup_{n\to\infty}|f_n|^{1/n}=\chi_{\{0\}}\not\ll\bf{1}
\]
and $L\leq\bf{1}$, where again $\bf{1}$ denotes the constant function on $\C$ taking the value $1$. Note that for each $m\in\N$ we have $\sum_{n=1}^{m}f_n(0)=m$, and therefore the series $\sum_{n=1}^{\infty}f_n$ diverges.
\end{proof}

The following remark illustrates the necessity of the universal completeness assumption in both Lemma~\ref{L:geoser} and Theorem~\ref{T:nthroottest}(i).

\begin{remark}\label{R:notuc}
Consider the non-universally complete Archimedean complex $\Phi$-algebra $A$ of complex-valued bounded continuous functions defined on $(0,1)$. Let $f\in A$ be the identity function on $(0,1)$. Observe that $f=|f|\ll\textbf{1}$ but the series $\sum\limits_{n=0}^{\infty}f^n$ does not converge in order in $A$. 
\end{remark}

A \textit{power series} on a universally complete Archimedean complex vector lattice $E$, centered at $c\in E$, is a series of the form $S(z):=\sum_{n=0}^{\infty}a_n(z-c)^n$, where $a_n\in E$ for each $n\in\N_0$ and $z$ represents a variable in $E$. We say a power series $S(z)$ \textit{converges uniformly in order} on a region $D\subseteq E$ if there exists a sequence $p_m\downarrow 0$ such that
\[
\sup_{z\in D}\left|\sum_{n=0}^{m}a_n(z-c)^n-S(z)\right|\leq p_m
\]
holds for each $m\in\N_0$.

Given $c\in E$ and $r\in E_+$, we use the notation $\bar{\Delta}(c,r):=\{z\in E:|z-c|\leq r\}$ to indicate the order closed ball centered at $c$ with radius $r$. For a power series $S(z)=\sum_{n=0}^{\infty}a_n(z-c)^n$ we define 
\[
\Omega_S:=\left\{r\in E_+\colon S(z)\ \textnormal{converges uniformly in order on}\ \bar{\Delta}(c,r)\right\}.
\]
If $\Omega_S$ is order bounded then $\sup\Omega_S$ exists in $E_+$, and we call $\rho_S:=\sup \Omega_S$ the \textit{radius of convergence} of $S(z)$.

For example, if we consider the geometric series $G(z)=\sum_{n=0}^{\infty}z^n$ on a universally complete Archimedean complex vector lattice then
for $r\ll e$ and $z\in\bar{\Delta}(0,r)$ we have that
\[
\left|\sum_{n=0}^mz^n-G(z)\right|\leq\sum_{n=m+1}^{\infty}|z|^n\leq\sum_{n=m+1}^\infty r^n\downarrow_m 0.
\]
Hence $G(z)$ converges uniformly in order on $\bar{\Delta}(0,r)$, and thus $r\in\Omega_G$. On the other hand, by Lemmas~\ref{L:ll}~and~\ref{L:geoser}, we know $r\leq e$ for all $r\in\Omega_G$. Thus $\Omega_G$ is order bounded and $\rho_G\leq e$. Furthermore, it follows from the above argument that $\e e\in\Omega_G$ for all $0\leq\e<1$, showing that $\rho_G=e$.

The following lemma will be useful throughout the remainder of the paper.

\begin{lemma}\label{L:supinflemma}
	Let $A$ be a uniformly complete Archimedean complex $\Phi$-algebra. For all $B\subseteq A_+$ such that $\sup B$ exists in $A_+$, respectively, $\inf B$ exists in $A_+$, we have
	\[
	\sup(aB)=a\sup B,\quad \mbox{respectively}\quad \inf(aB)=a\inf B\quad (a\in A_+). 
	\]
\end{lemma}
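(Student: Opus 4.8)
The plan is to reduce the statement to the real part and then to extract it from the bounded inversion property of uniformly complete Archimedean $\Phi$-algebras. Since $a\in A_+$ and $B\subseteq A_+$, the set $aB$ lies in the real part $A_\rho$, which is itself a uniformly complete Archimedean (real) $\Phi$-algebra with unit $e$; so I may assume throughout that $A$ is real. Write $s:=\sup B$ (resp.\ $t:=\inf B$). Because $L_a\colon x\mapsto ax$ is a positive map, $as$ is an upper bound of $aB$ (resp.\ $at$ is a lower bound of $aB$); the work is to show that these are the \emph{least}, resp.\ \emph{greatest}, such bound. The key input is the sublemma: in a uniformly complete Archimedean $\Phi$-algebra, $a+\tfrac1n e$ is invertible for every $a\in A_+$ and $n\in\N$, and by property~(iv) its inverse is positive.

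Granting the sublemma, the two cases run symmetrically. For the supremum, let $v\in A$ be any upper bound of $aB$. For all $b\in B$ and $n\in\N$, using $ab\le v$ and $b\le s$,
\[
\Bigl(a+\tfrac1n e\Bigr)b=ab+\tfrac1n b\le v+\tfrac1n s .
\]
Multiplying through by the positive element $\bigl(a+\tfrac1n e\bigr)^{-1}$ gives $b\le\bigl(a+\tfrac1n e\bigr)^{-1}\bigl(v+\tfrac1n s\bigr)$ for every $b\in B$, hence $s\le\bigl(a+\tfrac1n e\bigr)^{-1}\bigl(v+\tfrac1n s\bigr)$; multiplying back by $a+\tfrac1n e\ge 0$ yields $as+\tfrac1n s\le v+\tfrac1n s$, i.e.\ $as\le v$. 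Thus $as=\sup(aB)$. For the infimum the same computation with reversed inequalities works: if $w\le ab$ for all $b\in B$, then $\bigl(a+\tfrac1n e\bigr)b\ge w+\tfrac1n t$, so $\bigl(a+\tfrac1n e\bigr)^{-1}\bigl(w+\tfrac1n t\bigr)\le b$ for all $b$, hence $\le t$, and multiplying back gives $w\le at$. Thus $at=\inf(aB)$.

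It remains to establish the sublemma, which is where uniform completeness genuinely enters and which I expect to be the main obstacle; everything above it is a two-line manipulation. One option is simply to quote bounded inversion, which is standard for uniformly complete Archimedean $\Phi$-algebras. If a self-contained argument is preferred, I would proceed as follows. Put $c:=a+\tfrac1n e\ge\tfrac1n e$ and $c_m:=c\wedge me$ for integers $m>\tfrac1n$. Each $c_m$ lies in the principal ideal $A_e$, which is a uniformly complete Archimedean $\Phi$-algebra with strong unit $e$, hence a copy of $C(K)$ for a compact Hausdorff space $K$; there $\tfrac1n e\le c_m\le me$ forces $c_m$ to be invertible with $0\le c_m^{-1}\le ne$. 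Since $c_m=c_k\wedge me$ for $k\ge m$, one has $c_m^{-1}-c_k^{-1}=c_m^{-1}c_k^{-1}(c_k-c_m)\ge 0$, and a pointwise estimate in $C(K)$ gives $\|c_m^{-1}-c_k^{-1}\|_e\le\tfrac1m$; hence $(c_m^{-1})_m$ is $e$-uniformly Cauchy and converges $e$-uniformly to some positive $d\in A_e$ with $|c_m^{-1}-d|\le\tfrac1m e$.

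Finally I would check $cd=e$. From $c\,c_m^{-1}\ge c_m c_m^{-1}=e$ and $|c\,c_m^{-1}-cd|=c|c_m^{-1}-d|\le\tfrac1m c\downarrow 0$ (so $c\,c_m^{-1}\to cd$ in order) one gets $cd\ge e$. On the other hand $c\,c_m^{-1}-e=(c-me)^+c_m^{-1}\le n\,(c-me)^+=n\,\bigl(a-(m-\tfrac1n)e\bigr)^+$, and this decreases to $0$ because $\sup_m(a\wedge me)=a$. The latter holds since $e$ is a weak order unit: in the Dedekind completion $A^\delta$, the band generated by $e$ is a band containing the order dense sublattice $A$, hence equals $A^\delta$, so $a$ lies in that band and $\sup_m(a\wedge me)=a$ in $A^\delta$, therefore also in $A$. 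Since $c\,c_m^{-1}\downarrow cd$, taking infima in $c\,c_m^{-1}\le e+n\bigl(a-(m-\tfrac1n)e\bigr)^+$ gives $cd\le e$, and combining the two inequalities yields $cd=e$. The delicate points of this last paragraph — the passage to $C(K)$ for the truncations $c_m$ and the identity $\sup_m(a\wedge me)=a$ — are precisely the places where uniform completeness and the weak-order-unit property of $e$ are used.
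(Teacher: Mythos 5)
Your proof is correct and takes essentially the same route as the paper's: both dominate an arbitrary upper bound $v$ of $aB$ by passing to the invertible perturbation $a+\varepsilon e$ (the paper with $\varepsilon=1$, you with $\varepsilon=1/n$), multiplying by its positive inverse, taking the supremum over $B$, and multiplying back. The only difference is that the paper simply cites the bounded-inversion property of ($e$-)uniformly complete $\Phi$-algebras from de Pagter's work, whereas you additionally supply a self-contained (and essentially sound) proof of that fact.
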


\begin{proof}
Let $B\subseteq A_+$ be such that $\sup B$ exists in $A_+$, and let $a\in A_+$. For every $b\in B$ we have $ab\leq a\sup B$. Next let $ab\leq u$ for every $b\in B$. Then $(a+e)b\leq u+b\leq u+\sup B$. Note that $A$ is $e$-uniformly complete by \cite[Theorem~11.4]{dP}, and therefore it follows from \cite[Theorem~11.1]{dP} that $a+e$ is invertible in $A$. Hence $b\leq(a+e)^{-1}(u+\sup B)$. Therefore, $\sup B\leq(a+e)^{-1}(u+\sup B)$, which implies $a\sup B\leq u$. A similar argument proves the second identity.
\end{proof}

We next record some useful facts regarding the set $\Omega_S$ with respect to a power series $S$. 

\begin{proposition}\label{P:Omega}
Let $S(z):=\sum_{n=0}^\infty a_n(z-c)^n$ be a power series on a universally complete Archimedean complex vector lattice $E$, and let $\Omega_S$ be defined as above.
\begin{itemize}
\item[(i)] $\Omega_S$ is a solid sublattice of $E_+$.
\item[(ii)] If the sequence $(|a_n|^{1/n})_{n\geq 1}$ is order bounded then, denoting the band generated by $L:=\limsup_{n\to\infty}|a_n|^{1/n}$ by $B_L$ and its disjoint complement by $B^d_L$, we have the inclusion $(B^d_L)_+\subseteq\Omega_S$.
\end{itemize}
\end{proposition}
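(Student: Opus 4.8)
\textbf{Proof plan for Proposition~\ref{P:Omega}.}

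For part (i), the plan is to verify the three closure properties separately, all of which follow from the monotonicity of the complex modulus and the triangle-type inequalities available in complex vector lattices. For solidity, suppose $r\in\Omega_S$ and $0\leq s\leq r$ in $E_+$; then $\bar{\Delta}(c,s)\subseteq\bar{\Delta}(c,r)$ since $|z-c|\leq s\leq r$, so the sup over the smaller ball of $\left|\sum_{n=0}^{m}a_n(z-c)^n-S(z)\right|$ is dominated by the same decreasing null sequence $p_m$ witnessing $r\in\Omega_S$; hence $s\in\Omega_S$. For the sublattice property it suffices, given $r,s\in\Omega_S$ with witnesses $p_m\downarrow 0$ and $q_m\downarrow 0$, to handle $r\vee s$ (the case $r\wedge s$ is then immediate from solidity). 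First I would check that $\bar\Delta(c,r\vee s)\subseteq\bar\Delta(c,r)\cup\bar\Delta(c,s)$ \emph{fails} in general, so instead I would argue pointwise-free as follows: for $z\in\bar\Delta(c,r\vee s)$, let $\P$ be the order projection onto the band generated by $(r-s)^+$; then $\P(|z-c|)\leq\P(r\vee s)=\P(r)\leq r$ and $(e-\P)(|z-c|)\leq(e-\P)(s)\leq s$, and using that $\P$ is a Riesz homomorphism commuting with multiplication (preliminaries (v),(vi)), one splits $\sum_{n=0}^{m}a_n(z-c)^n-S(z)$ into its $\P$- and $(e-\P)$-parts, bounds each by $p_m$ and $q_m$ respectively, and recombines to get the bound $p_m\vee q_m\downarrow 0$ uniformly over $\bar\Delta(c,r\vee s)$. (One must also confirm $S(z)$ actually converges for such $z$, but this comes out of the same estimate showing the partial sums are order Cauchy, together with order Cauchy completeness of $E$.) Hence $r\vee s\in\Omega_S$.

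For part (ii), the goal is to show that the restriction of the power series to the band $B_L^d$ converges uniformly in order on every ball $\bar\Delta(c,r)$ with $r\in(B_L^d)_+$. The idea is that on $B_L^d$ the element $L$ projects to $0$, so the $n$th root test (Theorem~\ref{T:nthroottest}(i)) applied inside $B_L^d$ should give convergence with any radius. Concretely, fix $r\in(B_L^d)_+$ and let $\P^d$ denote the order projection onto $B_L^d$. For $z\in\bar\Delta(c,r)$ we have $z-c=\P^d(z-c)+(e-\P^d)(z-c)$; I would show that $(e-\P^d)(z-c)=0$ — indeed $|(e-\P^d)(z-c)|=(e-\P^d)(|z-c|)\leq(e-\P^d)(r)=0$ since $r\in(B_L^d)_+=(e-\P^d)(E)_+$ — so in fact $\bar\Delta(c,r)\subseteq c+B_L^d$ and the whole series lives in the $\Phi$-algebra $B_L^d$ (with identity $\P^d(e)$). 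Next, applying the order projection $\P^d$ to $b_m:=\sup_{n\geq m}|a_n|^{1/n}$ and using order continuity of $\P^d$ (preliminaries (vi)) together with $\P^d$ being a Riesz homomorphism, one gets $\limsup_{n\to\infty}|\P^d(a_n)|^{1/n}=\P^d(L)=0$ in $B_L^d$ (here one needs $\P^d(|a_n|^{1/n})=|\P^d(a_n)|^{1/n}$, which follows from $\P^d$ preserving $n$th roots of positive elements — a consequence of (v) plus uniqueness of $n$th roots). Then for each fixed $r\in(B_L^d)_+$ the series $\sum_n|\P^d(a_n)|\,r^n$ has $n$th root-limsup equal to $\limsup|\P^d(a_n)|^{1/n}\cdot r = 0\ll \P^d(e)$ inside $B_L^d$, so Theorem~\ref{T:nthroottest}(i) gives absolute order convergence of $\sum_n a_n(z-c)^n$ uniformly over $\bar\Delta(c,r)$ via the tail estimate $\left|\sum_{n=0}^{m}a_n(z-c)^n-S(z)\right|\leq\sum_{n=m+1}^{\infty}|a_n|r^n\downarrow_m 0$; hence $r\in\Omega_S$.

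I expect the main obstacle to be part (i)'s sublattice claim: the naive set-theoretic inclusion for closed balls under $\vee$ is false in a genuine vector lattice, so the argument genuinely needs the band-projection splitting of $z-c$ relative to $(r-s)^+$ and careful use of the fact that order projections are Riesz homomorphisms that commute with the $f$-algebra multiplication and are order continuous. A secondary technical point, recurring in part (ii), is the compatibility of order projections with $n$th roots, $\P(a^{1/n})=(\P a)^{1/n}$ for $a\in A_+$; this is not listed among (i)--(vi) but follows from $\P(a^{1/n})^n=\P((a^{1/n})^n)=\P(a)$ by (v) and the uniqueness of $n$th roots in the uniformly complete $\Phi$-algebra $\P(E)$. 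The rest — solidity, the tail estimates, invoking order Cauchy completeness — is routine.
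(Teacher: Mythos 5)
Your proposal is correct and follows essentially the same route as the paper: part (i) is proved by the same band-projection splitting of $z-c$ relative to the band generated by $(r-s)^+$ (the paper uses the complementary projection $\P_{r<s}$, which is immaterial), and part (ii) reduces, exactly as in the paper, to the $n$th root test with $\limsup$ equal to $0\ll e$ because $r\perp L$ forces $L|z-c|=0$, followed by the same tail estimate. Your extra care about order Cauchy completeness in (i) and about band projections commuting with $n$th roots in (ii) is sound but not needed beyond what the paper already invokes.
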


\begin{proof}
(i) It is clear from its definition that $\Omega_S$ is a solid set, and thus $\Omega_S$ is closed under finite infima. To prove that $\Omega_S$ is closed under finite suprema, let $r,s\in\Omega_S$. By using a change of variables if necessary, we can assume that $c=0$. Then $S(z)$ converges uniformly in order on $\bar{\Delta}(0,r)$ and also on $\bar{\Delta}(0,s)$. Using \cite[page 215]{LilZan} in the second identity below, we obtain
\[
\P_{r<s}(s)-\P_{r<s}(r)=\P_{r<s}(s-r)=(s-r)^+\geq 0.
\]
Thus
\[
\P_{r<s}(r\vee s)=\P_{r<s}(r)\vee\P_{r<s}(s)=\P_{r<s}(s).
\]
It follows that for $|z|\leq r\vee s$,
\[
s-\P_{r<s}(|z|)\geq s-\P_{r<s}(r\vee s)=s-\P_{r<s}(s)\geq 0,
\]
and thus $\P_{r<s}(|z|)\leq s$. Hence $\sum_{n=0}^{\infty}a_n\P_{r<s}(z)^n$ converges uniformly in order on $\bar{\Delta}(0,r\vee s)$. Similarly, $\sum_{n=0}^{\infty}a_n\P_{r\geq s}(z)^n$ also converges uniformly in order on $\bar{\Delta}(0,r\vee s)$. Finally, observe that for all $|z|\leq r\vee s$ and any $m\in\N_0$,
\begin{align*}
\sum_{n=0}^{m}a_nz^n&=\sum_{n=0}^{m}a_n\P_{r<s}(z)^n+\sum_{n=0}^{m}a_n\P_{r\geq s}(z)^n.
\end{align*}
It now follows that $S(z)$ converges uniformly in order on $\bar{\Delta}(0,r\vee s)$. Therefore, $r\vee s\in\Omega_S$.
	
(ii) Suppose $(|a_n|^{1/n})_{n\geq 1}$ is order bounded, and also set $L:=\limsup_{n\to\infty}|a_n|^{1/n}$. Let $r\in (B^d_L)_+$, and take $z\in E$ such that $|z-c|\leq r$. Since $L\perp|z-c|$, we have $L(z-c)=0$. Using Lemma~\ref{L:supinflemma} in the second equality below we obtain
\[
\limsup_{n\to\infty}(|a_n||z-c|^n)^{1/n}=\limsup_{n\to\infty}(|a_n|^{1/n}|z-c|)=L|z-c|=0\ll e.
\]
Hence $S(z)$ converges absolutely in order on $\bar{\Delta}(c,r)$ by Theorem~\ref{T:nthroottest}(i). Moreover, for each $m\in\N_0$ and all $z\in\bar{\Delta}(0,r)$,
\[
\left|\sum_{n=0}^ma_n(z-c)^n-S(z)\right|\leq\sum_{n=m+1}^\infty|a_n|r^n\downarrow_m0,
\]
and thus $S(z)$ converges uniformly in order on $\bar{\Delta}(c,r)$ as well. Therefore, $r\in\Omega_S$.
\end{proof}

Given a universally complete Archimedean complex vector lattice $E$, we denote the principle band generated by the element $a$ inside $E$ by $B_a$ and the associated band projection onto $B_a$ by $\P_a$. We note that, for each $a\in E\setminus\{0\}$, it is readily checked that $B_a$ is a universally complete $f$-subalgebra of $E$ that has $\P_a(e)$ as its multiplicative identity. Moreover, if we have $a\in E\setminus\{0\}$ then $a$ is a weak order unit in $B_a$ and thus has a multiplicative inverse in $B_a$.

\begin{notation}
For a universally complete Archimedean complex vector lattice $E$ and $a\in E$, $a\neq 0$, we denote the multiplicative inverse of $a$ in $B_a$ by $a^\ast$. Moreover, we set $0^\ast:=0$.
\end{notation}

We proceed by proving a universally complete Archimedean complex vector lattice version of the classical Cauchy-Hadamard formula.

\begin{theorem}[\textbf{Cauchy-Hadamard}]\label{T:CH}
Let $E$ be a universally complete Archimedean complex vector lattice, and let $S(z):=\sum_{n=0}^\infty a_n(z-c)^n$ be a power series on $E$.
\begin{itemize}
\item[(i)] If $(|a_n|^{1/n})_{n\geq 1}$ is order bounded then for $L:=\limsup_{n\to\infty}|a_n|^{1/n}$ we have that $\P_L(\Omega_S)$ is order bounded and $L^\ast=\underset{r\in\Omega_S}{\sup}\,\P_L(r)$.
\item[(ii)] If $\Omega_S$ is order bounded then $\bigl(\P_{\rho_S}(|a_n|^{1/n})\bigr)_{n\geq 1}$ is order bounded and furthermore we have $\limsup_{n\to\infty}\P_{\rho_S}(|a_n|^{1/n})=\rho_S^\ast$.
\item[(iii)] $(|a_n|^{1/n})_{n\geq 1}$ is order bounded and $L:=\limsup_{n\to\infty}|a_n|^{1/n}$ is a weak order unit if and only if $\Omega_S$ is order bounded and $\rho_S$ is a weak order unit. In either case, we have $L^{-1}=\rho_S$.
\item[(iv)] $\limsup_{n\to\infty}|a_n|^{1/n}=0$ if and only if $\Omega_S=E_+$.
\end{itemize}
\end{theorem}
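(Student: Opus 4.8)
\emph{Part (i).} The plan is to prove (i) by a double inequality, read off the easy half of (iii) from it, then deal with the hard half of (iii), reduce (ii) to (iii) by localizing to a band, and finally settle (iv). Write $L:=\limsup_n|a_n|^{1/n}$, so that $L^\ast$ is the inverse of $L$ in $B_L$ and $L^\ast\ge 0$ by property (iv) applied inside $B_L$. For the inequality $\sup_{r\in\Omega_S}\P_L(r)\le L^\ast$: given $r\in\Omega_S$, the series $\sum_n a_nr^n=S(c+r)$ converges in order, so Theorem~\ref{T:nthroottest}(ii) applied to it (whose $n$th root sequence is $(|a_n|^{1/n}r)_{n\ge1}$) gives $\limsup_n|a_n|^{1/n}r\le e$; Lemma~\ref{L:supinflemma}, used to pull the positive factor $r$ out of the suprema and the infimum defining the $\limsup$, turns the left side into $Lr$, whence $Lr\le e$, and applying $\P_L$ and then multiplying by $L^\ast$ yields $\P_L(r)\le L^\ast$. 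For the reverse inequality I would show $\e L^\ast\in\Omega_S$ for every $0\le\e<1$: if $|z-c|\le\e L^\ast$ then $\limsup_n(|a_n|\,|z-c|^n)^{1/n}=L|z-c|\le\e\P_L(e)\ll e$, because $e-\e\P_L(e)=\P_L^d(e)+(1-\e)\P_L(e)$ is a weak order unit, so Theorem~\ref{T:nthroottest}(i) gives absolute order convergence on $\bar\Delta(c,\e L^\ast)$ and the tail estimate $\bigl|\sum_{n\le m}a_n(z-c)^n-S(z)\bigr|\le\sum_{n>m}|a_n|(\e L^\ast)^n\downarrow_m0$ gives uniform order convergence; then $\P_L(\e L^\ast)=\e L^\ast$ and $\sup_{0\le\e<1}\e L^\ast=L^\ast$ by the Archimedean property.

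\emph{Part (iii).} The forward implication is immediate from (i): if $L$ is a weak order unit then it is invertible (Remark~\ref{R:wou^-1}), $B_L=E$, $\P_L=I$ and $L^\ast=L^{-1}$, so (i) yields that $\Omega_S$ is order bounded with $\rho_S=L^{-1}$, which is again a weak order unit. The converse is the crux. Assume $\Omega_S$ order bounded and $\rho:=\rho_S$ a weak order unit; the goal is to produce an invertible element of $\Omega_S$. The decisive structural fact --- and the only point at which universal (more precisely, lateral) completeness is genuinely used --- is that $\Omega_S$ is closed under suprema of pairwise disjoint families: if $(r_i)$ is pairwise disjoint in $\Omega_S$ and $r=\sup_ir_i$, then for $|z-c|\le r$ one decomposes $z-c$ into its components in the disjoint bands $B_{r_i}$, so $S(z)=\sum_i\sum_na_n\P_{B_{r_i}}(z-c)^n$ splits into disjoint summands, and the error sequences $p^{(i)}_m$ witnessing uniform convergence on $\bar\Delta(c,r_i)$ glue to $q_m:=\sup_ip^{(i)}_m$, which exists by lateral completeness and satisfies $q_m\downarrow0$. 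Granting this, for $0\le\e<1$ let $\P^\e_r$ be the band projection onto the band generated by $(r-\e\rho)^+$, $r\in\Omega_S$: these projections are upward directed with supremum $I$ (since $\sup_r(r-\e\rho)^+\ge(1-\e)\rho$ is a weak order unit), each $\P^\e_r(\e\rho)\le r$ lies in $\Omega_S$, and refining the directed family to a partition of unity exhibits $\e\rho=\sup_r\P^\e_r(\e\rho)$ as a disjoint supremum of members of $\Omega_S$; hence $\e\rho\in\Omega_S$, and it is invertible since $\rho$ is. With an invertible $r_0\in\Omega_S$ at hand, $\sum_na_nr_0^n$ converges, so $(|a_n|r_0^n)_n$ is order bounded by some $M$ and $|a_n|^{1/n}=(|a_n|r_0^n)^{1/n}r_0^{-1}\le(M+e)r_0^{-1}$ for all $n\ge1$, giving order boundedness of $(|a_n|^{1/n})$; Proposition~\ref{P:Omega}(ii) then forces $L:=\limsup_n|a_n|^{1/n}$ to be a weak order unit (else $(B_L^d)_+\subseteq\Omega_S$ with $B_L^d\ne\{0\}$ contradicts order boundedness of $\Omega_S$), and (i) yields $L^{-1}=\rho_S$.

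\emph{Parts (ii) and (iv).} For (ii), if $\rho_S=0$ all assertions are trivial, so assume $\rho:=\rho_S\ne0$ and (after a translation) $c=0$, and localize to the band $B:=B_\rho$, a universally complete complex $\Phi$-algebra in which $\rho$ is a weak order unit. One checks that the restricted power series $\tilde S(w):=\sum_n\P_\rho(a_n)w^n$ on $B$ satisfies $\P_\rho(\Omega_S)\subseteq\Omega_{\tilde S}\subseteq\Omega_S$, using that $|z|\le\tilde r\in B$ forces $z\in B$ and hence $a_nz^n=\P_\rho(a_n)z^n$; consequently $\rho_{\tilde S}=\P_\rho(\rho_S)=\rho$. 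Applying the converse of (iii), just proved, inside $B$ gives that $(|\P_\rho(a_n)|^{1/n})_{n\ge1}$ is order bounded with $\limsup_n|\P_\rho(a_n)|^{1/n}$ equal to the inverse of $\rho$ in $B$, i.e.\ to $\rho^\ast$; and since $\P_\rho$ is a Riesz and algebra homomorphism, uniqueness of roots gives $|\P_\rho(a_n)|^{1/n}=\P_\rho(|a_n|^{1/n})$, which is exactly the claim. For (iv): if $\limsup_n|a_n|^{1/n}=0$ then $B_L=\{0\}$, so $(B_L^d)_+=E_+\subseteq\Omega_S$ by Proposition~\ref{P:Omega}(ii), forcing $\Omega_S=E_+$; conversely, if $\Omega_S=E_+$ then $ke\in\Omega_S$ for every $k\in\N$, so $\sum_nk^na_n$ converges in order and Theorem~\ref{T:nthroottest}(ii) gives $k\limsup_n|a_n|^{1/n}\le e$ for all $k$, whence $\limsup_n|a_n|^{1/n}\le\inf_k\frac{1}{k}e=0$ by the Archimedean property.

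\emph{Main obstacle.} Everything outside the converse of (iii) is a fairly direct combination of the $n$th root test, Lemma~\ref{L:supinflemma}, and band-projection bookkeeping. The real difficulty is in the converse of (iii): a priori the ``partial radii'' one obtains on the various bands $B_r$ ($r\in\Omega_S$) need not assemble into a single global bound on $(|a_n|^{1/n})$, and the resolution --- the disjoint-supremum closure of $\Omega_S$, which genuinely uses lateral completeness, feeding a partition-of-unity/maximality argument to extract an invertible element of $\Omega_S$ --- is both the technical heart of the theorem and the precise point where the universal completeness hypothesis is indispensable.
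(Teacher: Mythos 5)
Your parts (i) and (iv) follow the paper's argument essentially verbatim, but you organize the remaining two parts in the opposite order, and the converse half of (iii) is where you genuinely diverge. The paper never produces an invertible element of $\Omega_S$: it proves (ii) directly by applying Lemma~\ref{L:supinflemma} to the identity $\sup_{r\in\Omega_S}\{|a_n|^{1/n}r\}=|a_n|^{1/n}\rho_S$, which together with the root-test bound $\limsup_{n\to\infty}(|a_n|^{1/n}r)\le e$ immediately gives order boundedness of $\bigl(\P_{\rho_S}(|a_n|^{1/n})\bigr)_{n\ge1}$ and $L'\le\rho_S^\ast$; it then uses Proposition~\ref{P:Omega}(ii) inside $B_{\rho_S}$ to see that $L'$ must be a weak order unit there (exactly your ``else $(B_{L'}^d)_+\subseteq\Omega_S$ contradicts boundedness'' observation) and finishes by applying (i) in $B_{\rho_S}$; part (iii) is then a two-line corollary of (i) and (ii). You instead attack the converse of (iii) head-on via a new structural lemma --- $\Omega_S$ is closed under suprema of pairwise disjoint families --- followed by a partition-of-unity argument showing $\e\rho_S\in\Omega_S$ for $0\le\e<1$, and only then recover (ii) by localizing to $B_{\rho_S}$. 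Your route is heavier (a Zorn-type refinement of the directed family of bands generated by $(r-\e\rho_S)^+$ to a disjoint one, plus interchanges of order sums with powers and with the series), but it does yield something the paper does not state: when $\rho_S$ is a weak order unit, every $\e\rho_S$ with $0\le\e<1$ actually lies in $\Omega_S$.

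One step of your disjoint-closure lemma needs repair: you set $q_m:=\sup_i p^{(i)}_m$ and justify its existence by lateral completeness, but lateral completeness only supplies suprema of \emph{pairwise disjoint} positive families, and the witnesses $p^{(i)}_m$ need not be disjoint or order bounded as a family. The fix is to observe that for $|w-c|\le r_i$ every error term $\sum_{n>m}a_n(w-c)^n$ lies in the band $B_{r_i}$ (each $(w-c)^n$ with $n\ge1$ does), so you may replace each $p^{(i)}_m$ by its component in $B_{r_i}$; the resulting family is pairwise disjoint, the suprema $q_m$ exist, and they still decrease to $0$ (test any positive lower bound against each $B_{r_i}$ and note $q_1\in B_r$). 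With that repair your argument goes through.
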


\begin{proof}
(i) Suppose $(|a_n|^{1/n})_{n\geq 1}$ is order bounded and set $L:=\limsup_{n\to\infty}|a_n|^{1/n}$. If $L=0$ then statement (i) trivially holds, so we can assume $L\neq 0$. Let $0\leq\e<1$, and suppose $z\in E$ satisfies $|z-c|\leq\e L^\ast$. Using Lemma~\ref{L:supinflemma} in the second equality, we obtain
\begin{align*}
\limsup_{n\to\infty}|a_n(z-c)^n|^{1/n}&=\limsup_{n\to\infty}\left(|a_n|^{1/n}|z-c|\right)\leq\e\limsup_{n\to\infty}\left(|a_n|^{1/n}L^{\ast}\right)\\
&=\e LL^{\ast}=\e\P_L(e)\ll e.
\end{align*}
It follows from Theorem~\ref{T:nthroottest}(i) that $S(z)$ converges absolutely in order on $\bar{\Delta}(c,\e L^\ast)$. Since $E$ is order Cauchy complete, we know that $S(z)$ converges in order on $\bar{\Delta}(c,\e L^\ast)$. Moreover, for any $z\in\bar{\Delta}(c,\e L^\ast)$, we have
\[
\left|\sum_{n=0}^{m}a_n(z-c)^n-S(z)\right|\leq\sum_{n=m+1}^{\infty}|a_n|(\e L^\ast)^n\downarrow_m0,
\]
showing that $S(z)$ converges uniformly in order on $\bar{\Delta}(c,\e L^\ast)$. Hence $\e L^\ast\in\Omega_S$. Next let $r\in\Omega_S$ be arbitrary. Then by Lemma~\ref{L:supinflemma} we have 
\[
\limsup_{n\to\infty}\left(|a_n|^{1/n}r\right)=Lr,
\]
and we also know that $Lr\leq e$ from Theorem~\ref{T:nthroottest}(ii). Thus $L\P_L(r)\leq\P_L(e)$, and hence $\P_L(r)\leq L^{\ast}$ for all $r\in\Omega_S$. Hence $\P_L(\Omega_S)$ is order bounded. Set
\[
\rho_S':=\underset{r\in\Omega_S}{\sup}\,\P_L(r).
\]
Then $\rho_S'\leq L^{\ast}$. Moreover, since $\e L^\ast\leq\rho'_S$ holds for all $0\leq\e<1$, we also have $L^{\ast}\leq\rho_S'$.
	
(ii) Next suppose $\Omega_S$ is order bounded. If $\rho_S=0$ then statement (ii) of the theorem is trivial, so we can assume that $\rho_S>0$. Let $r\in\Omega_S$. By Theorem~\ref{T:nthroottest}(ii), we know
	\[
	\limsup_{n\to\infty}(|a_n|^{1/n}r)\leq e.
	\]
	Thus there exists a sequence $p_m\downarrow 0$ such that
	\begin{align*}
	\sup_{n\geq m}(|a_n|^{1/n}r)&=\sup_{n\geq m}(|a_n|^{1/n}r)-\limsup_{n\to\infty}(|a_n|^{1/n}r)+\limsup_{n\to\infty}(|a_n|^{1/n}r)\\
	&\leq p_m+e
	\end{align*}
	holds for each $m\in\N$. Thus for every $m\in\N$ and all $n\geq m$, it follows from Lemma~\ref{L:supinflemma} that
	\[
	|a_n|^{1/n}\rho_S=\sup_{r\in\Omega_S}\{|a_n|^{1/n}r\}\leq p_m+e,
	\]
	and hence
	\[
	\P_{\rho_S}(|a_n|^{1/n})\rho_S\leq\P_{\rho_S}(p_m)+\P_{\rho_S}(e).
	\]
	Thus we obtain
	\[
	\sup_{n\geq m}\P_{\rho_S}(|a_n|^{1/n})\leq\P_{\rho_S}(p_m)\rho_S^{\ast}+\rho_S^{\ast},
	\]
	showing that $\bigl(\P_{\rho_S}(|a_n|^{1/n})\bigr)_{n\geq 1}$ is order bounded. Taking $m\to\infty$, we see that 
	\[
	L':=\limsup_{n\to\infty}\P_{\rho_S}(|a_n|^{1/n})\leq\rho_S^{\ast}.
	\]
	Next applying Proposition~\ref{P:Omega}~(ii) to the power series 
	\[
	R(w)=\sum_{n=0}^{\infty}\P_{\rho_S}(a_n)\bigl(w-\P_{\rho_S}(c)\bigr)^n
	\]
	on $B_{\rho_S}$, we have $(B^d_{L'})_+\subseteq\Omega_{R}$, where $B_{L'}$ is the principal band generated by $L'$ inside $B_{\rho_S}$. We next prove that $\Omega_{R}\subseteq\Omega_S$. To this end, let $r\in\Omega_{R}$. Then $R(w)$ converges uniformly in order on $\bar{\Delta}(\P_{\rho_S}(c),r)$. To show that $r\in\Omega_S$, suppose $z\in E$ satisfies $|z-c|\leq r$. Then $z-c\in B_{\rho_S}$, and thus for every $n\in\N$ we have that $a_n(z-c)^n\in B_{\rho_S}$ as well. Hence for each $m\in\N$ we have
	\[
	\sum_{n=1}^m a_n(z-c)^n=\sum_{n=1}^m\P_{\rho_S}(a_n)\bigl(\P_{\rho_S}(z)-\P_{\rho_S}(c)\bigr)^n.
	\]
	Next let $p_m\downarrow 0$ satisfy 
	\[
	\sup_{w\in\bar{\Delta}(\P_{\rho_S}(c),r)}\left|\sum_{n=0}^m\P_{\rho_S}(a_n)\bigl(w-\P_{\rho_S}(c)\bigr)^n-R(w)\right|\leq p_m\quad (m\in\N_0).
	\]
Then
\begin{align*}
&\left|\sum_{n=1}^ma_n(z-c)^n-\Bigl(R\bigl(\P_{\rho_S}(z)\bigr)-\P_{\rho_S}(a_0)\Bigr)\right|=\left|\sum_{n=0}^m\P_{\rho_S}(a_n)\bigl(\P_{\rho_S}(z)-\P_{\rho_S}(c)\bigr)^n-R\bigl(\P_{\rho_S}(z)\bigr)\right|\\
&\hskip 2.5cm \leq\sup_{w\in\bar{\Delta}(\P_{\rho_S}(c),r)}\left|\sum_{n=0}^m\P_{\rho_S}(a_n)\bigl(w-\P_{\rho_S}(c)\bigr)^n-R\bigl(w\bigr)\right|\leq p_m,
\end{align*}
and hence $S(z)$ converges uniformly in order on $\bar{\Delta}(c,r)$, where we have 
\[
S(z)=R\bigl(\P_{\rho_S}(z)\bigr)-\P_{\rho_S}(a_0)+a_0.
\]
This shows that $r\in\Omega_S$, and thus we obtain $(B^d_{L'})_+\subseteq\Omega_{S}$. Noting that $\Omega_S$ is order bounded, we see that $L'$ is a weak order unit in $B_{\rho_S}$. Applying (i) to the space $B_{\rho_S}$, we have that $L'=\rho_S^\ast$.
	
(iii) Suppose $(|a_n|^{1/n})_{n\ge 1}$ is order bounded and that  $L:=\limsup_{n\to\infty}|a_n|^{1/n}$ is a weak order unit. It follows from (i) that $\Omega_S$ is order bounded, that $\rho_S$ is a weak order unit and that $L^{-1}=\rho_S$. Conversely, if $\Omega_S$ is order bounded and $\rho_S$ is a weak order unit then it follows from (ii) that the sequence $(|a_n|^{1/n})_{n\ge 1}$ is order bounded and that $L:=\limsup_{n\to\infty}|a_n|^{1/n}$ is a weak order unit that satisfies $L^{-1}=\rho_S$.
	
(iv) Suppose $L:=\limsup_{n\to\infty}|a_n|^{1/n}=0$. Then $E_+=(B_L^d)_+\subseteq\Omega_S$ by Proposition~\ref{P:Omega}~(ii), and thus $\Omega_S=E_+$. Conversely, if $\Omega_S=E_+$ then Theorem~\ref{T:nthroottest}(ii) and the fact that $ne\in\Omega_S$ for all $n\in\N$ imply that $S(ne+c)$ converges in order, that $L$ exists in $E$, and that $nL\leq e$ for all $n\in\N$. By the Archimedean property of $E$, we conclude that $L=0$.
\end{proof}

In the classical theory of power series, Abel's theorem relates the limit of a power series to its series of coefficients. Given that the tools used in the classical theorem of Abel are elementary in nature, it is not surprising that the proof of this theorem (see e.g. \cite[Section~2.5]{Ahlf}) adapts to a more general version in universally complete Archimedean complex vector lattices. 

\begin{proposition}[\textbf{Abel's Theorem}]\label{P:Abel's}
Let $E$ be a universally complete Archimedean complex vector lattice, and assume that $S(z):=\sum_{n=0}^{\infty}a_nz^n$ is a power series with radius of convergence $e$. Suppose $\sum_{n=0}^{\infty}a_n$ converges in order. Then for any sequence $(z_k)_{k\geq 0}$ in $E$ such that 

\begin{itemize}
	\item[(i)] $z_k\to e$,
	\item[(ii)] for each $k\in\N_0$ we have $|z_k|\ll e$, and
	\item[(iii)] the set $\left\{|e-z_k|(e-|z_k|)^{-1}:k\in\N_0\right\}$ is order bounded,
\end{itemize}
we have $S(z_k)\to S(e)$.
\end{proposition}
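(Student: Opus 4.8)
The plan is to transplant the classical Abel-summation argument (as in \cite[Section~2.5]{Ahlf}) into this setting, with the order-theoretic analogue of a Stolz-angle bound supplied by hypothesis~(iii). Write $s_n:=\sum_{j=0}^{n}a_j$ with $s_{-1}:=0$, put $s:=S(e)=\sum_{n=0}^{\infty}a_n$, which converges in order by assumption, and fix $q_n\downarrow 0$ with $|s_n-s|\le q_n$; then $|s_n|\le|s|+q_0$ for every $n$. The first step is the summation-by-parts identity
\[
\sum_{n=0}^{m}a_nz^n=(e-z)\sum_{n=0}^{m-1}s_nz^n+s_mz^m\qquad(m\in\N_0),
\]
valid in any $\Phi$-algebra via $a_n=s_n-s_{n-1}$, commutativity, and distributivity. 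Fix $k$ and set $z=z_k$. By~(ii) we have $|z_k|\ll e$, so Lemma~\ref{L:geoser} shows that $\sum_{n\ge 0}|z_k|^n$ converges in order with sum $(e-|z_k|)^{-1}$ and that $(e-z_k)\sum_{n\ge 0}z_k^n=e$, while Lemma~\ref{L:divtest} gives $|z_k|^n\to 0$. Comparing $|s_nz_k^n|\le(|s|+q_0)|z_k|^n$ with this geometric series (pulling out the constant by Lemma~\ref{L:supinflemma}) shows that $\sum_{n\ge 0}s_nz_k^n$ converges absolutely in order, hence in order, and also that $s_mz_k^m\to 0$. Letting $m\to\infty$ in the identity then shows that $S(z_k)$ converges in order and $S(z_k)=(e-z_k)\sum_{n=0}^{\infty}s_nz_k^n$.

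Next I would subtract off the limit. Multiplying $(e-z_k)\sum_{n\ge 0}z_k^n=e$ by $s$ and using order continuity of multiplication gives $s=(e-z_k)\sum_{n\ge 0}sz_k^n$, whence
\[
S(z_k)-s=(e-z_k)\sum_{n=0}^{\infty}(s_n-s)z_k^n.
\]
Using $|ab|=|a||b|$, subadditivity and order continuity of the modulus, $|s_n-s|\le q_n$, and $0\le|z_k|\le e$, this yields $|S(z_k)-s|\le|e-z_k|\sum_{n=0}^{\infty}q_n|z_k|^n$. Splitting the series at an arbitrary $N\in\N$, bounding $q_n\le q_0$ and $|z_k|^n\le e$ on the first $N$ terms and $q_n\le q_N$ and $\sum_{n\ge N}|z_k|^n\le(e-|z_k|)^{-1}$ on the tail, gives
\[
|S(z_k)-s|\le Nq_0\,|e-z_k|+q_N\,|e-z_k|\,(e-|z_k|)^{-1}
\]
for every $N$ and every $k$. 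Now hypotheses~(i) and~(iii) enter: choose $p_k\downarrow 0$ with $|e-z_k|\le p_k$ and let $M\in E_+$ bound the set $\{\,|e-z_k|(e-|z_k|)^{-1}:k\in\N_0\,\}$, so that $|S(z_k)-s|\le Nq_0p_k+q_NM$ for all $N$ and $k$.

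To finish, set $r_k:=\inf_{N\ge 1}(Nq_0p_k+q_NM)$, which exists in $E_+$ by Dedekind completeness, so $|S(z_k)-S(e)|\le r_k$. Since each $Nq_0p_k+q_NM$ decreases in $k$, the sequence $(r_k)_{k\ge 0}$ is decreasing; and by Lemma~\ref{L:supinflemma} we have $\inf_k(Nq_0p_k)=Nq_0\inf_kp_k=0$, hence $\inf_kr_k\le\inf_k(Nq_0p_k+q_NM)=q_NM$ for every $N$, and then $\inf_kr_k\le\inf_{N\ge 1}(q_NM)=M\inf_{N\ge 1}q_N=0$. Thus $r_k\downarrow 0$ and $S(z_k)\to S(e)$. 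The one point genuinely subtler than in the scalar setting is the interaction of the two indices $n$ and $k$: one cannot choose $N$ depending on $k$ as in a scalar $\e$-argument, so the estimate must be repackaged into a single order-null dominating sequence $r_k$, which is exactly what Lemma~\ref{L:supinflemma} together with the Dedekind completeness of $E$ makes possible.
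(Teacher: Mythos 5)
Your proof is correct and follows essentially the same route as the paper's: summation by parts to get $S(z)=(e-z)\sum_n s_nz^n$, then a split of the series at an index $N$ with hypothesis (iii) controlling the tail via $(e-|z_k|)^{-1}$. The only (harmless) differences are cosmetic: the paper first reduces to the case $\sum_n a_n=0$ and concludes via $\limsup_k|S(z_k)|=0$, whereas you work directly with $s_n-s$ and package the estimate into an explicit dominating null sequence $r_k$; your version also spells out more carefully why the identity $S(z_k)=(e-z_k)\sum_n s_nz_k^n$ holds at each $z_k$ with $|z_k|\ll e$.
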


\begin{proof}
First suppose that $\sum_{n=0}^{\infty}a_n=0$. Set $s_m=\sum_{n=0}^ma_n$ for each $m\in\N_0$. Since $s_m\to 0$, there exists a sequence $p_m\downarrow 0$ such that $|s_m|\leq p_m$ for all $m\in\N_0$. Using summation by parts, we obtain
\[
\sum_{n=0}^ma_nz^n=(e-z)\left(\sum_{n=0}^{m-1}s_nz^n\right)+s_mz^m.
\]
Hence, for any $r\in\Omega_S$ and all $z\in\bar{\Delta}(0,r)$ we have 
\[
\left|\sum_{n=0}^ma_nz^n-(e-z)\left(\sum_{n=0}^{m-1}s_nz^n\right)\right|\leq|s_m||z|^m\leq|s_m|\leq p_m,
\]
and so
\[
S(z)=(e-z)\sum_{n=0}^{\infty}s_nz^n.
\]
Next let $(z_k)_{k\geq 0}$ be a sequence satisfying $(i),\ (ii),\ \text{and}\ (iii)$ above, and say $|e-z_k|\leq q_k$ for each $k\in\N_0$, with $q_k\downarrow 0$. Let $\upsilon\in E_+$ be an upper bound of the set 
\[
\left\{|e-z_k|(e-|z_k|)^{-1}:k\in\N_0\right\}, 
\]
and let $m\in\N_0$ be arbitrary. Then for each $k\in\N_0$, we obtain from Lemma~\ref{L:geoser} that
\begin{align*}
|S(z_k)|&\leq|e-z_k|\sum_{n=0}^{m}|s_n||z_k|^n+|e-z_k|\sum_{n=m+1}^{\infty}|s_n||z_k|^n\\
&\leq q_k\sum_{n=0}^{m}|s_n|+p_{m+1}|e-z_k|(e-|z_k|)^{-1}\\
&\leq q_k\sum_{n=0}^{m}|s_n|+p_{m+1}\upsilon.
\end{align*}
Thus $(|S(z_k)|)_{k\ge 1}$ is order bounded. Furthermore, using Lemma~\ref{L:supinflemma} in the last equality below, we obtain
\[
0\leq\limsup_{k\to\infty}|S(z_k)|=\inf_{p\geq 1}\left(\sup_{k\geq p}|S(z_k)|\right)\leq\inf_{p\geq 1}\left\{q_p\left(\sum_{n=0}^m|s_n|\right)+p_{m+1}\upsilon\right\}=p_{m+1}\upsilon.
\]
Letting $m\to\infty$ shows that $\limsup_{k\to\infty}|S(z_k)|=0$. Hence $\liminf_{k\to\infty}|S(z_k)|=0$, and by \cite[Theorem~12.7]{LilZan}, we have $S(z_k)\to 0=S(e)$.

More generally, assume $\sum_{n=0}^{\infty}a_n=a$ for some $a\in E$. Define a power series on $E$ by $S'(z):=\sum_{n=0}^\infty a'_nz^n$, where $a'_0:=a_0-a$ and $a'_n:=a_n$ for all $n\in\N$. Then $\sum_{n=0}^\infty a'_n=0$, and by the argument above, $S'(z_k)\to 0$. This however is equivalent to $S(z_k)\to a=S(e)$.
\end{proof}

Note that $e$ in Proposition~\ref{P:Abel's} can be replaced by any positive weak order unit in $E$.

\begin{corollary}
Let $S(z):=\sum_{n=0}^\infty a_nz^n$ be a power series on a universally complete Archimedean complex vector lattice $E$ with radius of convergence $\rho_S$ which is a weak order unit. Suppose $S(\rho_S)$ converges in order, and assume that $(z_k)_{k\ge 0}$ is a sequence in $E$ such that
\begin{itemize}
	\item[(i)] $z_k\to \rho_S$,
	\item[(ii)] for each $k\in\N_0$ we have $|z_k|\ll \rho_S$, and
	\item[(iii)] the set $\left\{|\rho_S-z_k|(\rho_S-|z_k|)^{-1}:k\in\N_0\right\}$ is order bounded.
\end{itemize}
Then $S(z_k)\to S(\rho_S)$. 
\end{corollary}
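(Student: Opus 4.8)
The plan is to deduce this from Abel's theorem (Proposition~\ref{P:Abel's}) by rescaling the variable by $\rho_S^{-1}$. Since $E$ is universally complete and $\rho_S$ is a weak order unit, Remark~\ref{R:wou^-1} shows that $\rho_S$ is invertible in $E$, and as $\rho_S\geq 0$ we also have $\rho_S^{-1}\geq 0$. Introduce the power series $T(w):=\sum_{n=0}^{\infty}(a_n\rho_S^n)w^n$ on $E$, centered at $0$. For every $m\in\N_0$ and $x\in E$ the partial sums satisfy
\[
\sum_{n=0}^m(a_n\rho_S^n)(\rho_S^{-1}x)^n=\sum_{n=0}^m a_n(\rho_S\rho_S^{-1})^nx^n=\sum_{n=0}^m a_nx^n,
\]
so $T(\rho_S^{-1}x)$ and $S(x)$ have identical partial sums; in particular $T$ converges in order at $\rho_S^{-1}x$ precisely when $S$ converges in order at $x$, with the same limit. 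Taking $x=\rho_S$ gives $T(e)=S(\rho_S)$, which converges in order by hypothesis.

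First I would verify that $T$ has radius of convergence $e$. Using $|\rho_S^{\pm 1}|=\rho_S^{\pm 1}$ together with the partial-sum identity above, one checks directly that $r\in\Omega_S$ implies $\rho_S^{-1}r\in\Omega_T$ and that $s\in\Omega_T$ implies $\rho_S s\in\Omega_S$, so that $\Omega_T=\rho_S^{-1}\Omega_S$. Since $\Omega_S$ is bounded above by $\rho_S$, the set $\Omega_T$ is bounded above by $e$, hence $\rho_T$ exists, and Lemma~\ref{L:supinflemma} yields $\rho_T=\rho_S^{-1}\sup\Omega_S=\rho_S^{-1}\rho_S=e$ (alternatively, this follows from the Cauchy--Hadamard formula, Theorem~\ref{T:CH}, applied to $T$). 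I would then set $w_k:=\rho_S^{-1}z_k$ and transfer the three hypotheses of Proposition~\ref{P:Abel's} to the sequence $(w_k)_{k\geq 0}$. Writing $e-w_k=\rho_S^{-1}(\rho_S-z_k)$ and $e-|w_k|=\rho_S^{-1}(\rho_S-|z_k|)$, and using that $f$-algebra multiplication is order continuous together with the infimum version of Lemma~\ref{L:supinflemma}: from $z_k\to\rho_S$ one gets $|w_k-e|\leq\rho_S^{-1}p_k\downarrow 0$, so $w_k\to e$; from $|z_k|\ll\rho_S$ and Lemma~\ref{L:ll} one gets $|w_k|=\rho_S^{-1}|z_k|\leq e$ with $e-|w_k|=\rho_S^{-1}(\rho_S-|z_k|)$ a product of invertible elements, hence invertible, so $|w_k|\ll e$ again by Lemma~\ref{L:ll}; and
\[
|e-w_k|(e-|w_k|)^{-1}=\rho_S^{-1}|\rho_S-z_k|\,(\rho_S-|z_k|)^{-1}\rho_S=|\rho_S-z_k|(\rho_S-|z_k|)^{-1},
\]
so the third hypothesis transfers verbatim from hypothesis (iii) of the corollary.

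Proposition~\ref{P:Abel's} then gives $T(w_k)\to T(e)$, and since $T(w_k)$ and $S(z_k)$ have the same partial sums for each fixed $k$, the series $S(z_k)$ converges in order to $T(e)=S(\rho_S)$, which is the claim. The step needing the most care is the identification $\Omega_T=\rho_S^{-1}\Omega_S$ (equivalently $\rho_T=e$), since it requires checking that the rescaling respects uniform order convergence on order closed balls; but this, like the transfer of the three hypotheses, is a routine consequence of the order continuity of multiplication, the positivity of $\rho_S^{-1}$, and Lemmas~\ref{L:ll} and~\ref{L:supinflemma}, so I do not anticipate a genuine obstacle.
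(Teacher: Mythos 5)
Your proposal is correct and follows essentially the same route as the paper: rescale the series to $T(w)=\sum_{n=0}^\infty (a_n\rho_S^n)w^n$, check that its radius of convergence is $e$, transfer the three hypotheses to $w_k=\rho_S^{-1}z_k$ using the invertibility of the weak order unit $\rho_S$, and apply Proposition~\ref{P:Abel's}. The only (immaterial) difference is that the paper establishes $\rho_T=e$ via Theorem~\ref{T:CH}(iii), which you mention as an alternative to your direct verification that $\Omega_T=\rho_S^{-1}\Omega_S$.
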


\begin{proof}
Consider the power series $S'(z)=\sum_{n=0}^{\infty}a_n\rho_S^nz^n$. By using Lemma~\ref{L:supinflemma} as well as Theorem~\ref{T:CH}(iii), we obtain
\[
\limsup_{n\to\infty}|a_n|^{1/n}\rho_S=\left(\limsup_{n\to\infty}|a_n|^{1/n}\right)\rho_S=e.
\]
Hence, by Theorem~\ref{T:CH}(iii) again, we conclude that $\rho_{S'}=e$. Furthermore, note that $S'(e)$ converges in order. Next note that the sequence $(\rho_S^{-1}z_k)_{k\geq 0}$ satisfies the hypotheses $(i), (ii)$, and $(iii)$ in the statement of Proposition~\ref{P:Abel's}. By Proposition~\ref{P:Abel's}, we obtain
\[
S(z_k)=S'(\rho_S^{-1}z_k)\to S'(e)=S(\rho_S).
\]
\end{proof}

\begin{Acknowledgement}
This research was partially supported by the Claude Leon Foundation and by the DST-NRF Centre of Excellence in Mathematical and Statistical Sciences (CoE-MaSS) (second author). Opinions expressed and conclusions arrived at are those of the authors and are not necessarily to be attributed to the CoE-MaSS.
\end{Acknowledgement}

\bibliography{nthchf}
\bibliographystyle{amsplain}

\end{document}